\RequirePackage{fix-cm}
\documentclass[oneside,english]{amsart}
\usepackage[T1]{fontenc}
\usepackage[latin9]{inputenc}
\usepackage{babel}
\usepackage{enumitem}
\usepackage{amstext}
\usepackage{amsthm}
\usepackage{amssymb}
\usepackage[all]{xy}
\usepackage[pdfusetitle,
 bookmarks=true,bookmarksnumbered=false,bookmarksopen=false,
 breaklinks=false,pdfborder={0 0 1},backref=false,colorlinks=false]
 {hyperref}

\makeatletter
\numberwithin{equation}{section}
\numberwithin{figure}{section}
\theoremstyle{plain}
\newtheorem{thm}{\protect\theoremname}
\theoremstyle{plain}
\newtheorem{prop}[thm]{\protect\propositionname}
\theoremstyle{remark}
\newtheorem{rem}[thm]{\protect\remarkname}
\theoremstyle{plain}
\newtheorem{lem}[thm]{\protect\lemmaname}
\theoremstyle{definition}
\newtheorem{example}[thm]{\protect\examplename}
\theoremstyle{definition}
\newtheorem{defn}[thm]{\protect\definitionname}
\theoremstyle{plain}
\newtheorem{cor}[thm]{\protect\corollaryname}

\usepackage{xy}
\newcommand{\xyR}[1]{\xydef@\xymatrixrowsep@{#1}}
\newcommand{\xyC}[1]{\xydef@\xymatrixcolsep@{#1}}

\xyoption{2cell}
\UseAllTwocells

\newcommand{\Shv}{\mathsf{Shv}}

\newcommand{\Grp}{\mathsf{Grp}}

\newcommand{\Sch}{\mathsf{Sch}}

\newcommand{\Max}{\mathrm{Max}}
\newcommand{\WeakAss}{\mathrm{WeakAss}}

\newcommand{\Reg}{\mathrm{Reg}}

\newcommand{\Hom}{\mathrm{Hom}}
\newcommand{\Spec}{\mathrm{Spec}}

\newcommand{\Res}{\mathrm{Res}}

\makeatother

\providecommand{\corollaryname}{Corollary}
\providecommand{\definitionname}{Definition}
\providecommand{\examplename}{Example}
\providecommand{\lemmaname}{Lemma}
\providecommand{\propositionname}{Proposition}
\providecommand{\remarkname}{Remark}
\providecommand{\theoremname}{Theorem}

\begin{document}
\title{Néron Models of $\mathbb{G}_{m}$}
\author{Christophe Cornut}
\address{Sorbonne Université, Université Paris Cité, CNRS, IMJ-PRG, F-75005
Paris, France}
\begin{abstract}
We construct ``Néron models'' of $\mathbb{G}_{m}$ over any base
scheme $S$. 
\end{abstract}

\thanks{Thanks to: Alexis Bouthier, Neil Epstein, Ning Guo, Arnab Kundu, and
Joshua Mundinger. }
\maketitle

\section{Introduction}

For an arbitrary scheme $S$, we construct and analyze an exact sequence
\[
1\rightarrow\mathbb{G}_{m,S}\rightarrow\mathbb{G}_{m,S}^{lft}\rightarrow\Pi_{S}\rightarrow1
\]
of smooth commutative group schemes over $S$: $\Pi_{S}$ is the étale
group scheme associated with the Zariski sheaf of Cartier divisors
$\mathcal{D}_{S}$ on $S$, and $\mathbb{G}_{m,S}^{lft}$ is the extension
of $\Pi_{S}$ by $\mathbb{G}_{m,S}$ corresponding to the tautological
primitive line bundle $\mathcal{L}_{S}$ on $\Pi_{S}$. 

When $S$ is the spectrum of a discrete valuation domain, the group
scheme $\mathbb{G}_{m,S}^{lft}$ is the Néron lft-model of $\mathbb{G}_{m}$,
as defined in \cite[\S 10]{BoLuRa90} or \cite[B.7]{KaPr23}. A far
more general sheaf theoretical construction was sketched by Grothendieck
near the end of \cite[VIII, \S 6]{SGA7.1}. Our variant retains this
generality, but produces actual group schemes. 

When $S$ is integral with generic point $\eta$, the generic fiber
of $\mathbb{G}_{m,S}^{lft}$ equals $\mathbb{G}_{m,\eta}$ and restriction
to the generic fibers thus induces a morphism of group sheaves
\[
\theta:\mathbb{G}_{m,S}^{lft}\rightarrow\Res_{\eta/S}\left(\mathbb{G}_{m,\eta}\right).
\]
We then say that a flat morphism $f:T\rightarrow S$ is \emph{convex
}when the restriction of $\theta$ to the small Zariski site of $T$
is an isomorphism. This captures the Néron property: if $f$ is flat
and convex, the restriction map $\mathbb{G}_{m,S}^{lft}(T)\rightarrow\mathbb{G}_{m}(T_{\eta})$
is an isomorphism, or equivalently, any element of $\mathbb{G}_{m}(T_{\eta})$
extends uniquely to a section of $\mathbb{G}_{m,S}^{lft}$ over $T$.
When $S$ is a Prüfer scheme (in the weak sense that all of its local
rings are valuation domains), we show that any smooth $f:T\rightarrow S$
is (flat and) convex. 

It follows that for any finitely presented smooth group scheme $G$
over any base scheme $S$, the sheaf $X(G)$ of characters of $G$
satisfies the valuative criterion of properness. In particular, if
$S$ is the spectrum of a valuation domain $\mathcal{O}$ with fraction
field $K$, the image of $G(\mathcal{O})\rightarrow G(K)$ is contained
in the kernel of 
\[
\nu_{G}:G(K)\rightarrow\Hom(X(G_{K}),\Gamma),\qquad g\mapsto(\chi\mapsto v(\chi(g))
\]
where $\Gamma=K^{\times}/\mathcal{O}^{\times}$ is the divisibility
group and $v:K^{\times}\rightarrow\Gamma$ is the valuation. 

\section{The construction}

\subsection{Regular sections}

For an open $U$ of $S$ and a section $x\in\mathcal{O}_{S}(U)$,
the following conditions are equivalent: 
\begin{itemize}
\item Multiplication by $x$ gives a monomorphism $\mathcal{O}_{U}\rightarrow\mathcal{O}_{U}$;
\item There is an affine cover $U=\cup U_{i}$ such that $x_{U_{i}}$ is
regular in $\mathcal{O}_{S}(U_{i})$;
\item For every affine open $V$ of $U$, $x_{V}$ is regular in $\mathcal{O}_{S}(V)$;
\item For every $s\in U$, $x_{s}$ is regular in $\mathcal{O}_{S,s}$.
\end{itemize}
We then say that $x$ is a \emph{regular section} of $\mathcal{O}_{S}$
over $U$, and denote by $\mathcal{O}_{S}^{reg}(U)$ the set of all
such sections. It is a saturated submonoid of the cancellative submonoid
$\Reg(\mathcal{O}_{S}(U))$ of regular elements in the multiplicative
monoid $(\mathcal{O}_{S}(U),\times)$, and all three monoids share
the same subgroup $\mathcal{O}_{S}^{\times}(U)$ of invertible elements.
The assignment $U\mapsto\mathcal{O}_{S}^{reg}(U)$ defines a multiplicative
submonoid sheaf $\mathcal{O}_{S}^{reg}$ of $(\mathcal{O}_{S},\times)$,
whereas $U\mapsto\Reg\left(\mathcal{O}_{S}(U)\right)$ may not even
be a subpresheaf of $\mathcal{O}_{S}$. Likewise, the stalk $\mathcal{O}_{S,s}^{reg}$
of $\mathcal{O}_{S}^{reg}$ at $s\in S$ is a saturated submonoid
of the cancellative submonoid $\Reg(\mathcal{O}_{S,s})$ of regular
elements in the multiplicative monoid $(\mathcal{O}_{S,s},\times)$,
and all three monoids again share the same subgroup $\mathcal{O}_{S,s}^{\times}$
of invertible elements. 

\subsection{Germs of meromorphic functions}

We denote by $\mathcal{M}_{S}$ the sheaf of $\mathcal{O}_{S}$-algebras
associated with the presheaf $U\mapsto\mathcal{O}_{S}(U)[\mathcal{O}_{S}^{reg}(U)^{-1}]$.
Thus $\mathcal{M}_{S}$ is the sheaf of \emph{germs of meromorphic
functions} on $S$ considered in \cite[20.1.3]{EGA4.4}, except that
the definition of regular sections given there has to be corrected
as indicated above, cf.~\cite{Kl79}. The stalk $\mathcal{M}_{S,s}$
of $\mathcal{M}_{S}$ at $s\in S$ is the sub-$\mathcal{O}_{S,s}$-algebra
$\mathcal{O}_{S,s}[(\mathcal{O}_{S,s}^{reg})^{-1}]$ of the total
ring of fractions $\mathcal{Q}(\mathcal{O}_{S,s})=\mathcal{O}_{S,s}[\Reg(\mathcal{O}_{S,s})^{-1}]$
of $\mathcal{O}_{S,s}$. In $\mathcal{Q}(\mathcal{O}_{S,s})$, 
\[
\mathcal{O}_{S,s}^{reg}=\mathcal{O}_{S,s}\cap\mathcal{M}_{S,s}^{\times}\qquad\text{and}\qquad\Reg(\mathcal{O}_{S,s})=\mathcal{O}_{S,s}\cap\mathcal{Q}(\mathcal{O}_{S,s})^{\times}.
\]
The first equality implies $\mathcal{O}_{S}^{reg}=\mathcal{O}_{S}\cap\mathcal{M}_{S}^{\times}$
in $\mathcal{M}_{S}$. Since $\mathcal{O}_{S,s}^{\times}=\mathcal{O}_{S,s}^{reg}\cap(\mathcal{O}_{S,s}^{reg})^{-1}$
in $\mathcal{M}_{S,s}^{\times}$ for all $s\in S$, we also have $\mathcal{O}_{S}^{\times}=\mathcal{O}_{S}^{reg}\cap(\mathcal{O}_{S}^{reg})^{-1}$
in $\mathcal{M}_{S}^{\times}$. One checks that the presheaf $U\mapsto\mathcal{O}_{S}(U)[\mathcal{O}_{S}^{reg}(U)^{-1}]$
is separated; thus for any open $U$ of $S$,
\[
\xyR{1.2pc}\xymatrix{ & \mathcal{O}_{S}(U)\ar@{^{(}->}[d]\\
\mathcal{Q}(\mathcal{O}_{S}(U)) & \mathcal{O}_{S}(U)[\mathcal{O}_{S}^{reg}(U)^{-1}]\ar@{_{(}->}[l]\ar@{^{(}->}[r] & \mathcal{M}_{S}(U).
}
\]
When $U$ is affine, $\mathcal{O}_{S}^{reg}(U)=\Reg(\mathcal{O}_{S}(U))$
and $\mathcal{O}_{S}(U)[\mathcal{O}_{S}^{reg}(U)^{-1}]=\mathcal{Q}(\mathcal{O}_{S}(U))$;
this then gives an embedding $\mathcal{Q}(\mathcal{O}_{S}(U))\hookrightarrow\mathcal{M}_{S}(U)$,
but it may fail to be an isomorphism, and such a morphism usually
does not exist for a non-affine $U$. 

\subsection{Simplifications }

Most of these pathologies disappear under good assumptions on $S$.
Suppose for instance that $S$ is integral (i.e.~reduced and irreducible)
with generic point $\eta$ and fraction field $k(S)=k(\eta)=\mathcal{O}_{S,\eta}$.
Then for any open $U\neq\emptyset$ of $S$, $\mathcal{O}_{S}(U)$
is an integral domain, $\mathcal{O}_{S}^{reg}(U)=\Reg(\mathcal{O}_{S}(U))=\mathcal{O}_{S}(U)\setminus\{0\}$,
so $U\mapsto\Reg(\mathcal{O}_{S}(U))$ is a sheaf, $U\mapsto\mathcal{Q}(\mathcal{O}_{S}(U))$
is a separated presheaf, and $\mathcal{M}_{S}$ is the associated
sheaf; if $U$ is affine, then $\mathcal{Q}(\mathcal{O}_{S}(U))$
equals $k(S)$, so $\mathcal{M}_{S}$ is the constant sheaf associated
with $k(S)$, and for any $s\in S$, $\mathcal{M}_{S,s}=\mathcal{Q}(\mathcal{O}_{S,s})=k(S)$. 

More generally, we have the following proposition. Let $\Max(S)$
be the set of maximal points of $S$, aka the generic points of the
irreducible components of $S$, and let $\WeakAss(S)$ be the set
of weakly associated points of $S$. 
\begin{prop}
\label{prop:NoPathologies}With notations as above:
\begin{enumerate}
\item $\Max(S)\subset\WeakAss(S)$ with equality when $S$ is reduced.
\item For any open $U$ of $S$ and $f\in\mathcal{O}_{S}(U)$, 
\[
f\in\mathcal{O}_{S}^{reg}(U)\iff\forall s\in\WeakAss(U),\quad f(s)\neq0\quad\text{in }k(s).
\]
\item If $\WeakAss(S)$ is locally finite at $s\in S$, then 
\[
\mathcal{O}_{S,s}^{reg}=\Reg(\mathcal{O}_{S,s})\quad\text{and}\quad\mathcal{M}_{S,s}=\mathcal{Q}(\mathcal{O}_{S,s}).
\]
\item If $\Max(S)$ is locally finite in $S$ and equal to $\WeakAss(S)$,
then 
\begin{enumerate}
\item $\mathcal{M}_{S}$ is quasi-coherent;
\item For any affine open $U$ of $S$, $\mathcal{M}_{S}(U)=\mathcal{Q}(\mathcal{O}_{S}(U));$
\item For any open $U$ of $S$, $\mathcal{M}_{S}(U)=\prod_{\eta\in\Max(U)}\mathcal{O}_{S,\eta}$;
\item For any $s\in S$, $\mathcal{M}_{S,s}=\mathcal{Q}(\mathcal{O}_{S,s})=\prod_{\eta\in\Max(S),s\in\overline{\eta}}\mathcal{O}_{S,\eta}$. 
\end{enumerate}
\end{enumerate}
\end{prop}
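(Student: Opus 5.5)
The plan is to reduce everything to stalks and use the standard characterization of weakly associated primes. Recall that a prime $\mathfrak{p}$ of a ring $A$ is weakly associated if it is minimal over $\mathrm{Ann}(a)$ for some $a\in A$. Equivalently, $\mathfrak{p}A_{\mathfrak{p}}$ is weakly associated in $A_{\mathfrak{p}}$, i.e.\ every element of $\mathfrak{p}A_{\mathfrak{p}}$ is a zero divisor locally in a suitable sense. The key facts I will use, as in Stacks Project and \cite{Kl79}, are:
\begin{enumerate}
\item the set of zero divisors of $A$ is the union of the weakly associated primes of $A$;
\item $\WeakAss$ localizes: $\WeakAss(A_{\mathfrak{q}})$ consists of the weakly associated primes of $A$ contained in $\mathfrak{q}$;
\item minimal primes are weakly associated, and for reduced $A$ they are the only ones.
\end{enumerate}

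For (1): a minimal prime $\mathfrak{p}$ has $\mathfrak{p}A_{\mathfrak{p}}$ nilpotent-radical. Hence every $x\in\mathfrak{p}$ is killed in $A_{\mathfrak{p}}$ by a power argument, which makes $\mathfrak{p}$ minimal over $\mathrm{Ann}(a)$ for suitable $a$. This gives $\Max(S)\subset\WeakAss(S)$. If $A$ is reduced, then $A_{\mathfrak{p}}$ is reduced. If $\mathfrak{p}$ is minimal over $\mathrm{Ann}(a)$, then in $A_{\mathfrak{p}}$ each $x\in\mathfrak{p}$ satisfies $x^{n}a=0$ with $a\neq0$; reducedness then forces $\mathfrak{p}A_{\mathfrak{p}}$ to be contained in every minimal prime not containing $a$, so $\mathfrak{p}$ is minimal.

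For (2): by the fourth equivalent definition of regular sections, $f\in\mathcal{O}_{S}^{reg}(U)$ iff $f_{s}$ is a nonzerodivisor in $\mathcal{O}_{S,s}$ for all $s\in U$. By fact (1), this means $f_{s}$ lies in no weakly associated prime of $\mathcal{O}_{S,s}$. By fact (2), those primes are exactly the points $t\in\WeakAss(U)$ that generalize $s$. Taking all $s$, in particular $s=t$, shows this is equivalent to $f(t)\neq0$ for all $t\in\WeakAss(U)$.

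For (3): choose an affine neighborhood $V=\Spec A$ of $s$ on which $\WeakAss(V)=\{t_{1},\dots,t_{n}\}$ is finite. Given $g\in\Reg(\mathcal{O}_{S,s})$, represent it by $f\in A$. By (2), $f(t_{i})\neq0$ for those $t_{i}$ generalizing $s$. The remaining $t_{i}$ do not specialize to $s$, so the points $s$ and $t_{i}$ can be separated: the closures $\overline{\{t_{i}\}}$ are closed sets not containing $s$, and finitely many of them can be removed. Shrinking $V$ to an open neighborhood of $s$ avoiding these closures, $f$ becomes regular on $V$, hence $g\in\mathcal{O}_{S,s}^{reg}$. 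The equality $\mathcal{M}_{S,s}=\mathcal{Q}(\mathcal{O}_{S,s})$ follows from the stalk description given earlier in the paper.

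For (4): assume $\Max=\WeakAss$ is locally finite. On an affine $U=\Spec A$ small enough that $\Max(U)=\{\eta_{1},\dots,\eta_{n}\}$ is finite, prime avoidance and fact (1) give that the zero divisors of $A$ form $\bigcup\mathfrak{p}_{i}$ with the $\mathfrak{p}_{i}$ minimal. Hence the primes of $\mathcal{Q}(A)$ are exactly the $\mathfrak{p}_{i}$. So $\mathcal{Q}(A)$ is zero-dimensional semilocal, and since its primes are incomparable and finite, $\mathcal{Q}(A)=\prod A_{\mathfrak{p}_{i}}$ by the Chinese remainder theorem applied after localization.

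The main obstacle is (4b) on arbitrary affines: I must show that $U\mapsto\prod_{\eta\in\Max(U)}\mathcal{O}_{S,\eta}$ is a sheaf that agrees with the sheafification. The sheaf property is clear, since a section is just a function on the maximal points. I then compare stalks using (3), obtaining (4d), and compare sections on small affines using the computation above. This gives $\mathcal{M}_{S}(U)=\prod_{\eta\in\Max(U)}\mathcal{O}_{S,\eta}$, which is (4c).

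Quasi-coherence (4a) follows because, on a small affine $\Spec A$, the sheaf $\mathcal{M}_{S}$ equals $\widetilde{\mathcal{Q}(A)}$. Indeed, for $D(h)$ we have $\Max(D(h))=\{\eta_{i}: h\notin\mathfrak{p}_{i}\}$, and $\mathcal{Q}(A)_{h}$ is the corresponding subproduct. Finally, (4b) for an arbitrary affine $U$ with $\Max(U)$ possibly infinite reduces to checking $\mathcal{Q}(A)=\prod_{\eta}A_{\eta}$. Quasi-coherence gives $\mathcal{M}_{S}(U)=\mathcal{M}_{S}(U)$ on affines as $\Gamma$ of a quasi-coherent sheaf. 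What remains is to show that the natural map $\mathcal{Q}(A)\to\Gamma(U,\mathcal{M}_{S})$ is onto; for this I will use quasi-compactness, which makes $\Max(U)$ finite, together with the computation above.
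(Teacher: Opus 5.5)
Your proposal is correct and follows essentially the paper's route. Your (3) shrinks to a neighbourhood of $s$ avoiding the closures of the finitely many weakly associated points that do not generalize $s$, which is the geometric form of the paper's choice of $f\in\bigcap_{q\in\mathcal{W}(a)}q\setminus p$. Your (4) uses the same inputs as the paper, namely $\mathcal{Q}(A)=\prod_i A_{\mathfrak{p}_i}$ and a stalkwise comparison with the sheaf $\mathcal{M}'(U)=\prod_{\eta\in\Max(U)}\mathcal{O}_{S,\eta}$, and only proves (c),(d) before (a),(b).

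When writing it up, make three things explicit:
\begin{itemize}
\item the natural map $\mathcal{M}_S\to\mathcal{M}'$, which exists because regular sections become units at maximal points by (1) and (2);
\item the one-line reason $\mathcal{Q}(A)_h$ is the claimed subproduct: if $h\in\mathfrak{p}_i$, then $h$ is nilpotent in $A_{\mathfrak{p}_i}$ since $\mathfrak{p}_i$ is minimal, so that factor vanishes;
\item in (1), simply take $a=1$, since a minimal prime is minimal over $\mathrm{Ann}(1)=0$.
\end{itemize}
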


\begin{proof}
For $(1)$, see~\cite[\href{https://stacks.math.columbia.edu/tag/05AQ}{Tags 05AQ} \& \href{https://stacks.math.columbia.edu/tag/0EME}{0EME}]{SP}.
For $(2)$, suppose $f\in\mathcal{O}_{S}^{reg}(U)$, so that $f_{s}\in\Reg(\mathcal{O}_{S,s})$
for all $s\in U$; if $s\in\WeakAss(U)$, $\Reg(\mathcal{O}_{S,s})=\mathcal{O}_{S,s}^{\times}$,
so $f(s)\neq0$ in $k(s).$ Conversely, suppose that $f(s)\neq0$
in $k(s)$ for all $s\in\WeakAss(U)$. Then $f\in\mathcal{O}_{S}^{reg}(U)$
by~\cite[\href{https://stacks.math.columbia.edu/tag/0AVP}{Tag 0AVP}]{SP}.
For $(3)$, we have to show that if $\WeakAss(S)$ is locally finite
at $s\in S$ and $\alpha\in\Reg(\mathcal{O}_{S,s})$, then $\alpha\in\mathcal{O}_{S,s}^{reg}$.
By assumption, we may lift $\alpha$ to $a\in A=\mathcal{O}_{S}(U)$
for some affine open neighborhood $U=\Spec(A)$ of $s$ in $S$ such
that the set $\mathcal{W}$ of weakly associated primes of $A$ is
finite. Let $p$ be the prime of $A$ corresponding to $s\in U$.
Note that $\alpha\in\mathcal{O}_{S,s}^{reg}$ if and only if there
is an $f\in A$ with $f\notin p$ such that the image $a_{f}$ of
$a$ in $A_{f}$ is regular. Let $\mathcal{F}$ be the set of all
such $f$'s; we have to show that $\mathcal{F}\neq\emptyset$. By~\cite[\href{https://stacks.math.columbia.edu/tag/05C3}{Tag 05C3}]{SP}
and for any $f\in A$, 
\[
\Reg(A_{p})=A_{p}\setminus\cup_{q\in\mathcal{W}_{p}}qA_{p}\quad\text{and}\quad\Reg(A_{f})=A_{f}\setminus\cup_{q\in\mathcal{W}_{f}}qA_{f}
\]
where $\mathcal{W}_{p}=\{q\in\mathcal{W}:q\subset p\}$ and $\mathcal{W}_{f}=\{q\in\mathcal{W}:f\notin q\}$.
Since $\alpha\in\Reg(A_{p})$, $a\not\in\cup_{q\in\mathcal{W}_{p}}q$
and our $f$'s are those for which $f\notin p$ and $a\notin\cup_{q\in\mathcal{W}_{f}}q$.
So 
\[
\mathcal{F}=\cap_{q\in\mathcal{W}(a)}q\setminus p\quad\text{where}\quad\mathcal{W}(a)=\{q\in\mathcal{W}:a\in q\}.
\]
Since no $q\in\mathcal{W}_{p}$ contains $a$, no $q\in\mathcal{W}(a)$
is contained in $p$; since $\mathcal{W}(a)$ is finite, it follows
that $\cap_{q\in\mathcal{W}(a)}q\not\subset p$, whence $\mathcal{F}\neq\emptyset$,
which finishes the proof of $(3)$. 

Suppose now that $\Max(S)$ is locally finite in $S$ and equal to
$\WeakAss(S)$. The conjunction of $(a)$ and $(b)$ is equivalent
to the following claim: for any affine open $U=\Spec(A)$ of $S$,
the morphism of $\mathcal{O}_{U}$-algebras $\varphi:\mathcal{Q}(A)^{\sim}\rightarrow\mathcal{M}_{S}\vert_{U}$
induced by the embedding of $\mathcal{Q}(A)=\mathcal{Q}(\mathcal{O}_{S}(U))=\mathcal{O}_{S}(U)[\mathcal{O}_{S}^{reg}(U)^{-1}]$
into $\mathcal{M}_{S}(U)$ is an isomorphism. This can be checked
on stalks. So let $s\in U$ correspond to a prime $p$ of $A$. By
$(3)$, $\mathcal{M}_{S,s}=\mathcal{Q}(\mathcal{O}_{S,s})=\mathcal{Q}(A_{p})$,
and $\varphi_{s}:\mathcal{Q}(A)_{s}^{\sim}\rightarrow\mathcal{M}_{S,s}$
is the $A_{p}$-algebra morphism $\mathcal{Q}(A)_{p}\rightarrow\mathcal{Q}(A_{p})$
extending $A\rightarrow A_{p}\hookrightarrow\mathcal{Q}(A_{p})$.
By our assumptions on $S$, the set $\mathcal{M}$ of minimal primes
of $A$ if finite and $\cup_{q\in\mathcal{M}}q$ is the set of zerodivisors
of $A$; thus by \cite[\href{https://stacks.math.columbia.edu/tag/02LX}{Tag 02LX}]{SP},
$\mathcal{Q}(A)=\prod_{q\in\mathcal{M}}A_{q}$; likewise, $\mathcal{Q}(A_{p})=\prod_{q\in\mathcal{M}_{p}}A_{q}$,
where $\mathcal{M}_{p}=\{q\in\mathcal{M}:q\subset p\}$. We thus have
to show that for any $q\in\mathcal{M}$, the localization $A_{q,p}$
of $A_{q}$ at $p$ is $A_{q}$ if $q\subset p$ and $0$ otherwise.
If $q\subset p$, the elements of $A\setminus p\subset A\setminus q$
are already invertible in $A_{q}$, so $A_{q,p}=A_{q}$; if $q\not\subset p$,
any element of $q\setminus p$ is invertible in $A_{p}$ and nilpotent
in $A_{q}$ (since $q$ is minimal), whence simultaneously invertible
and nilpotent in $A_{q,p}=A_{q}\otimes_{A}A_{p}$, so $A_{q,p}=0$.
Thus $\mathcal{Q}(A)_{p}\rightarrow\mathcal{Q}(A_{p})$ is an isomorphism,
which proves $(a)$ and $(b)$, along with $(c)$ for affine $U$'s,
from which $(d)$ easily follows. The general formula in $(c)$ defines
a sheaf $\mathcal{M}_{S}^{\prime}$ on $S$, and the formula in $(d)$
then gives a morphism $\mathcal{M}_{S}\rightarrow\mathcal{M}_{S}^{\prime}$
which is an isomorphism on stalks, whence an isomorphism, and this
proves $(c)$ for arbitrary $U$'s. 
\end{proof}
\begin{rem}
Under the assumptions of $(4)$, the quasi-coherent sheaf $\mathcal{M}_{S}$
is flasque by $(c)$, and so is the Zariski sheaf $\mathcal{M}_{S}^{\times}.$
Moreover, the affine morphism $\Spec(\mathcal{M}_{S})\rightarrow S$
is the embedding of $\coprod_{\eta\in\Max(S)}\Spec(\mathcal{O}_{S,\eta})\simeq\Max(S)$
as a discrete subspace of $S$. 
\end{rem}

\subsection{The Zariski sheaf of Cartier divisors}

It is defined by the exact sequence 
\[
\xymatrix{1\ar[r] & \mathcal{O}_{S}^{\times}\ar[r] & \mathcal{M}_{S}^{\times}\ar[r]^{\mathrm{div}} & \mathcal{D}_{S}\ar[r] & 1}
\]
of Zariski sheaves on $S$. It is a sheaf of partially ordered commutative
groups, where the subsheaf of non-negative elements $\mathcal{D}_{S}^{+}$
of $\mathcal{D}_{S}$ is the image of $\mathcal{O}_{S}^{reg}\subset\mathcal{M}_{S}^{\times}$
in $\mathcal{D}_{S}$ \cite[21.1]{EGA4.4}. The stalk of $\mathcal{D}_{S}$
at $s\in S$ is the subgroup $\mathcal{D}_{S,s}=\mathcal{M}_{S,s}^{\times}/\mathcal{O}_{S,s}^{\times}$
of the divisibility group $\mathcal{D}(\mathcal{O}_{S,s})=\mathcal{Q}(\mathcal{O}_{S,s})^{\times}/\mathcal{O}_{S,s}^{\times}$
of $\mathcal{O}_{S,s}$. Since $\mathcal{O}_{S,s}^{reg}=\mathcal{M}_{S}^{\times}\cap\Reg(\mathcal{O}_{S,s})$
in $\mathcal{Q}(\mathcal{O}_{S,s})^{\times}$, $\mathcal{D}_{S,s}^{+}=\mathcal{D}(\mathcal{O}_{S,s})^{+}\cap\mathcal{D}_{S,s}$,
i.e.~$\mathcal{D}_{S,s}\hookrightarrow\mathcal{D}(\mathcal{O}_{S,s})$
is an \emph{embedding} of partially ordered commutative groups; since
$\mathcal{O}_{S,s}^{reg}$ is also saturated in $\Reg(\mathcal{O}_{S,s})$,
$\mathcal{D}_{S,s}$ is a \emph{convex} subgroup of $\mathcal{D}(\mathcal{O}_{S,s})$,
i.e.~$\mathcal{D}_{S,s}\hookrightarrow\mathcal{D}(\mathcal{O}_{S,s})$
is a \emph{convex embedding}. 

We write $\text{div}:\mathcal{M}_{S}^{\times}\rightarrow\mathcal{D}_{S}$
for the quotient map, and use additive notations for the group structure
on $\mathcal{D}_{S}$. Mapping $x\in\mathcal{M}_{S}^{\times}(U)$
to the free rank one $\mathcal{O}_{U}$-submodule of $\mathcal{M}_{S}\vert_{U}=\mathcal{M}_{U}$
spanned by $x$ induces an isomorphism $d\mapsto\mathcal{L}(d)$ between
$\mathcal{D}_{S}$ and the sheaf of (fractional) invertible $\mathcal{O}_{S}$-submodules
in $\mathcal{M}_{S}$, cf.~\cite[21.2]{EGA4.4}. 
\begin{rem}
\label{rem:EdgeMorph4Specializations}For a specialization $s'\rightsquigarrow s$
in $S$, the snake lemma gives an exact sequence
\[
\xyC{1pc}\xymatrix{1\ar[r] & \ker\left(\mathcal{O}_{S,s}^{\times}\rightarrow\mathcal{O}_{S,s'}^{\times}\right)\ar[r] & \ker\left(\mathcal{M}_{S,s}^{\times}\rightarrow\mathcal{M}_{S,s'}^{\times}\right)\ar[r] & \ker\left(\mathcal{D}_{S,s}\rightarrow\mathcal{D}_{S,s'}\right)\ar[d]^{\delta_{s,s'}}\\
1 & \mathrm{coker}\left(\mathcal{D}_{S,s}\rightarrow\mathcal{D}_{S,s'}\right)\ar[l] & \mathrm{coker}\left(\mathcal{M}_{S,s}^{\times}\rightarrow\mathcal{M}_{S,s'}^{\times}\right)\ar[l] & \mathrm{coker}\left(\mathcal{O}_{S,s}^{\times}\rightarrow\mathcal{O}_{S,s'}^{\times}\right)\ar[l]
}
\]
If $S$ is locally integral, this reduces to an isomorphism 
\[
\delta_{s,s'}:\ker\left(\mathcal{D}_{S,s}\rightarrow\mathcal{D}_{S,s'}\right)\rightarrow\mathcal{O}_{S,s'}^{\times}/\mathcal{O}_{S,s}^{\times}.
\]
\end{rem}

\subsection{The group scheme of Cartier divisors}

Let $\pi:\Pi_{S}\rightarrow S$ be the \emph{espace étalé} of $\mathcal{D}_{S}$.
Thus $\Pi_{S}$ is a partially ordered commutative group scheme over
$S$, whose structure morphism $\pi$ is a local isomorphism -- in
particular, $\Pi_{S}$ is étale over $S$. As a set, $\left|\Pi_{S}\right|=\coprod_{s\in S}\mathcal{D}_{S,s}$
with the obvious projection to $S$. As a scheme, it is covered by
opens $U_{d}=\{d_{s},s\in U\}\simeq U$ for opens $U$ of $S$ and
sections $d\in\mathcal{D}_{S}(U)$. The structural sheaf $\mathcal{O}_{\Pi_{S}}$
is the sheaf theoretical inverse image $\pi^{-1}\mathcal{O}_{S}$
of $\mathcal{O}_{S}$ under the topological map $\pi$. Thus for any
sheaf of $\mathcal{O}_{S}$-module $\mathcal{F}$ on $S$, 
\[
\pi^{\ast}\mathcal{F}=\mathcal{O}_{\Pi_{S}}\otimes_{\pi^{-1}\mathcal{O}_{S}}\pi^{-1}\mathcal{F}=\pi^{-1}\mathcal{F}.
\]
In particular, $\mathcal{F}\mapsto\pi^{\ast}\mathcal{F}$ is exact.
The group structure on $\Pi_{S}$ is given set theoretically by the
coproduct of the sum maps on the stalks of $\mathcal{D}_{S}$, 
\[
\left|\Pi_{S}\times_{S}\Pi_{S}\right|=\coprod_{s}\mathcal{D}_{S,s}\times\mathcal{D}_{S,s}\rightarrow\coprod_{s}\mathcal{D}_{S,s}=\left|\Pi_{S}\right|.
\]
The poset structure is given by the submonoid $\Pi_{S}^{+}\subset\Pi_{S}$
with $\left|\Pi_{S}^{+}\right|=\coprod_{s\in S}\mathcal{D}_{S,s}^{+}$. 
\begin{rem}
\label{rem:PointsOfPiSOverLocals}For a local $S$-scheme $T$ with
closed point $t\in T$ above $s\in S$, the map $a\mapsto a(t)$ identifies
$\Pi_{S}(T)$ with the group $\mathcal{D}_{S,s}=\mathcal{M}_{S,s}^{\times}/\mathcal{O}_{S,s}^{\times}$. 
\end{rem}

\subsection{The primitive line bundle $\mathcal{L}$ on $\Pi_{S}$}

There is a unique invertible subsheaf $\mathcal{L}$ of $\pi^{\ast}\mathcal{M}_{S}$
such that for any open $U$ of $S$ and $d\in\mathcal{D}_{S}(U)$,
the restriction of $\mathcal{L}$ to the open $U_{d}$ of $\Pi_{S}$
is equal to the restriction of the subsheaf $\pi^{\ast}\mathcal{L}(d)$
of $\pi^{\ast}\mathcal{M}_{U}$ to the open $U_{d}$ of $\pi^{-1}(U)\subset\Pi_{S}$.
Indeed, uniqueness is clear; for the existence, we have to show that
for sections $d$ and $e$ of $\mathcal{D}_{S}$ over opens $U$ and
$V$ of $S$, $\pi^{\ast}\mathcal{L}(d)$ and $\pi^{\ast}\mathcal{L}(e)$
agree on $U_{d}\cap V_{e}$. Now $U_{d}\cap V_{e}=W_{f}$, where $W$
is the largest open of $U\cap V$ where $d$ and $e$ agree, and $f=d_{W}=e_{W}$
in $\mathcal{D}_{S}(W)$; thus $\mathcal{L}(d)\vert_{W}=\mathcal{L}(f)=\mathcal{L}(e)\vert_{W}$
in $\mathcal{M}_{W}$, so that $\pi^{\ast}\mathcal{L}(d)$ and $\pi^{\ast}\mathcal{L}(e)$
even agree on the larger open $\pi^{-1}(W)$ of $\Pi_{S}$.

Let $p_{1},p_{2}$ and $m:\Pi_{S}\times_{S}\Pi_{S}\rightarrow\Pi_{S}$
be the projections and multiplication maps, and set $\overline{\pi}=\pi\circ p_{1}=\pi\circ p_{2}=\pi\circ m$
for the structural morphism $\Pi_{S}\times_{S}\Pi_{S}\rightarrow S$.
All of these maps are local isomorphisms. The $\mathcal{O}_{S}$-bilinear
multiplication map of $\mathcal{M}_{S}$ induces an isomorphism $\mathcal{M}_{S}\otimes_{\mathcal{O}_{S}}\mathcal{M}_{S}\rightarrow\mathcal{M}_{S}$,
which pulls-back to an isomorphism 
\[
p_{1}^{\ast}\left(\pi^{\ast}\mathcal{M}_{S}\right)\otimes p_{2}^{\ast}\left(\pi^{\ast}\mathcal{M}_{S}\right)=\overline{\pi}^{\ast}\mathcal{M}_{S}\otimes\overline{\pi}^{\ast}\mathcal{M}_{S}\rightarrow\overline{\pi}^{\ast}\mathcal{M}_{S}=m^{\ast}\left(\pi^{\ast}\mathcal{M}_{S}\right).
\]
The latter induces a canonical isomorphism $p_{1}^{\ast}\mathcal{L}\otimes p_{2}^{\ast}\mathcal{L}\simeq m^{\ast}\mathcal{L}$.
Indeed for sections $d$ and $e$ of $\mathcal{D}_{S}$ over opens
$U$ and $V$ of $S$, $m$ induces an isomorphism $U_{d}\times_{S}V_{e}\rightarrow W_{f}$
where $W=U\cap V$ and $f=d_{W}+e_{W}$, and our claim thus follows
from the fact that 
\[
\mathcal{L}(d_{W})\cdot\mathcal{L}(e_{W})=\mathcal{L}(f)\quad\text{in}\quad\mathcal{M}_{W}.
\]
In other words, $\mathcal{L}$ is a \emph{primitive} line bundle on
$\Pi_{S}$.

\subsection{The smooth group scheme $\mathbb{G}_{m,S}^{lft}$}

It follows that $\mathcal{L}$ gives rise to an extension 
\[
1\rightarrow\mathbb{G}_{m,S}\rightarrow\mathbb{G}_{m,S}^{lft}\rightarrow\Pi_{S}\rightarrow1
\]
of (say) Zariski sheaves on $\Sch_{S}$ as follows. For an $S$-scheme
$T$, let $\mathbb{G}_{m,S}^{lft}(T)$ be the set of pairs $(a,\alpha)$
where $a\in\Pi_{S}(T)$ and $\alpha$ is an invertible section of
$a^{\ast}\mathcal{L}$ over $T$, i.e.~an element $\alpha\in\Gamma(T,a^{\ast}\mathcal{L})$
such that the induced morphism $\mathcal{O}_{T}\rightarrow a^{\ast}\mathcal{L}$
is an isomorphism. The product $(c,\gamma)=(a,\alpha)\cdot(b,\beta)$
of two such pairs is defined by $c=a+b$ in $\Pi_{S}(T)$, and $\gamma=\alpha\otimes\beta$
in $a^{\ast}\mathcal{L}\otimes b^{\ast}\mathcal{L}\simeq c^{\ast}\mathcal{L}$.
The unit is $(0,1)$, where $1$ is here the unit of $0^{\ast}\mathcal{L}=\mathcal{O}_{T}$,
and the inverse of $(a,\alpha)$ is $(-a,\alpha^{-1})$, where $\alpha^{-1}$
is the unique invertible section of $(-a)^{\ast}\mathcal{L}$ such
that $\alpha\otimes\alpha^{-1}\mapsto1$ under the isomorphism $a^{\ast}\mathcal{L}\otimes(-a)^{\ast}\mathcal{L}\rightarrow0^{\ast}\mathcal{L}=\mathcal{O}_{T}$.
These constructions are plainly functorial in $T$, and define a Zariski
sheaf $\mathbb{G}_{m,S}^{lft}$ on $\Sch_{S}$. The formulae $(a,\alpha)\mapsto a$
and $\lambda\mapsto(0,\lambda)$ respectively define a (Zariski) surjective
morphism $\mathbb{G}_{m,S}^{lft}\rightarrow\Pi_{S}$ and its kernel
$\mathbb{G}_{m,S}\hookrightarrow\Pi_{S}$. Being a $\mathbb{G}_{m}$-torsor
over $\Pi_{S}$, $\mathbb{G}_{m,S}^{lft}$ is representable by a smooth
$\Pi_{S}$-scheme, and since $\Pi_{S}\rightarrow S$ itself is étale,
$\mathbb{G}_{m,S}^{lft}$ is a smooth group scheme over $S$, and
$\mathbb{G}_{m,S}\hookrightarrow\mathbb{G}_{m,S}^{lft}$ is an open
immersion which identifies $\mathbb{G}_{m,S}$ with the neutral component
of $\mathbb{G}_{m,S}^{lft}$, and $\Pi_{S}$ with its group of connected
components. 
\begin{rem}
\label{rem:PointsOfGmLFToverLocals}For a local $S$-scheme $T$ with
closed point $t\in T$ above $s\in S$, elements $(a,\alpha)$ of
$\mathbb{G}_{m,S}^{lft}(T)$ correspond to pairs $(d,\delta)$ where
$d\in\mathcal{D}_{S,s}$ and $\delta$ is an $\mathcal{O}_{T,t}$-basis
of $\mathcal{L}(d)\otimes_{\mathcal{O}_{S,s}}\mathcal{O}_{T,t}$;
here $\mathcal{L}(d)$ is the free $\mathcal{O}_{S,s}$-submodule
of $\mathcal{M}_{S,s}=\mathcal{O}_{S,s}[(\mathcal{O}_{S,s}^{reg})^{-1}]$
spanned by any lift of $d\in\mathcal{M}_{S,s}^{\times}/\mathcal{O}_{S,s}^{\times}$.
The corresponding morphism $a:T\rightarrow\Pi_{S}$ maps $t$ to $d\in\pi_{S}^{-1}(s)=\mathcal{D}_{S,s}$,
with $\alpha_{t}=\delta$ in $(a^{\ast}\mathcal{L})_{t}=\mathcal{L}(d)\otimes_{\mathcal{O}_{S,s}}\mathcal{O}_{T,t}$. 
\end{rem}

\section{Separation}

We say that the scheme $S$ is \emph{locally coherent} if it has a
covering $S=\cup_{i}U_{i}$ by affine opens $U_{i}=\Spec(A_{i})$
such that each $A_{i}$ is a coherent ring; equivalently, for every
affine open $U=\Spec(A)$ of $S$, $A$ is coherent. 
\begin{lem}
\label{lem:GmlftQuasiSepIfSloccoh}If $S$ is locally coherent, then
$\Pi_{S}$ and $\mathbb{G}_{m,S}^{lft}$ are quasi-separated over
$S$. 
\end{lem}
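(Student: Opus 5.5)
The plan is to reduce everything to a statement about $\Pi_{S}$ and then to a statement about sections of $\mathcal{D}_{S}$. Since $\mathbb{G}_{m,S}^{lft}\rightarrow\Pi_{S}$ is a $\mathbb{G}_{m}$-torsor, it is affine, hence separated and quasi-compact, hence quasi-separated. A composite of quasi-separated morphisms is quasi-separated, so $\mathbb{G}_{m,S}^{lft}$ is quasi-separated over $S$ as soon as $\Pi_{S}$ is. So the whole lemma comes down to showing that $\pi:\Pi_{S}\rightarrow S$ is quasi-separated.

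For $\Pi_{S}$, the question is local on $S$, so we may take $S=\Spec(A)$ affine with $A$ coherent. The opens $U_{d}$, with $U$ a quasi-compact open of $S$ and $d\in\mathcal{D}_{S}(U)$, form a basis of $\Pi_{S}$ consisting of quasi-compact opens, since $U_{d}\simeq U$. It therefore suffices to show that $U_{d}\cap V_{e}$ is quasi-compact for any two such opens. As computed in the construction of $\mathcal{L}$, we have $U_{d}\cap V_{e}=W_{f}\simeq W$, where $W$ is the largest open of $U\cap V$ on which $d$ and $e$ agree. Replacing $d,e$ by their restrictions to $U\cap V$, which is quasi-compact because $S$ is affine, and subtracting, we reduce to the following claim. \emph{For a quasi-compact open $U$ of $S$ and $g\in\mathcal{D}_{S}(U)$, the open locus $\{s\in U:g_{s}=0\}$ is quasi-compact.} This locus is the complement of the support of $g$, and it suffices to check quasi-compactness after covering $U$ by finitely many basic affines $D(h)$, with $A_{h}$ again coherent. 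On a small enough affine we may further write $g=\mathrm{div}(x/y)$ with $x,y$ regular.

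So let $B$ be a coherent ring and $x,y\in\Reg(B)$. The vanishing locus of $g=\mathrm{div}(x/y)$ is the set of primes $p$ with $xB_{p}=yB_{p}$. This is exactly the complement of $\mathrm{Supp}(M)$, where
\[
M=(xB+yB)/xB\oplus(xB+yB)/yB .
\]
Here coherence is used. The ideal $xB+yB$ is finitely generated, hence finitely presented because $B$ is coherent. Then $(xB+yB)/xB$ is a quotient of a finitely presented module by a finitely generated submodule, so it is finitely presented; the same holds for the other summand. The support of a finitely presented module $N$ is $V(\mathrm{Fitt}_{0}(N))$ with $\mathrm{Fitt}_{0}(N)$ finitely generated. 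Its complement is therefore a finite union of basic opens $D(h_{i})$, which is quasi-compact. Note that the statement only needs "finitely generated annihilator-type ideal", and the Fitting ideal supplies that; the annihilator itself need not be finitely generated.

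The step I expect to need care is the local representation $g=\mathrm{div}(x/y)$ with $x,y$ regular on an affine. A section of $\mathcal{M}_{S}^{\times}$ is only locally a fraction, and regular sections are taken in the sheaf sense $\mathcal{O}^{reg}$. This is still fine. Locally $g$ lifts to an element of $\mathcal{O}(V)[\mathcal{O}^{reg}(V)^{-1}]$ on a basic affine $V$, and after shrinking it becomes a unit there. Such a unit can be written $x/y$ with $x,y\in\mathcal{O}^{reg}(V)=\Reg(\mathcal{O}(V))$ by the saturation property and the equality $\mathcal{O}_{S}^{reg}=\mathcal{O}_{S}\cap\mathcal{M}_{S}^{\times}$. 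Finitely many such $V$ cover the quasi-compact $U$, which completes the reduction.
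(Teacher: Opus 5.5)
Your proof is correct and follows essentially the paper's route. You reduce to $\Pi_{S}$, then to quasi-compactness of the locus where a local section $\mathrm{div}(x/y)$ of $\mathcal{D}_{S}$ vanishes, and use coherence to see that this locus is a finite union of basic opens. The paper differs only cosmetically: it phrases the reduction as retro-compactness of the unit section, and uses finite generation of the colon ideals $(\mathcal{O}b:\mathcal{O}a)$ instead of a Fitting ideal. These colon ideals are exactly the annihilators of your summands, since $(xB+yB)/yB\cong B/(yB:xB)$.
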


\begin{proof}
Since $\mathbb{G}_{m,S}^{lft}\rightarrow\Pi_{S}$ is affine, it is
sufficient to establish that $\Pi_{S}\rightarrow S$ is quasi-separated,
which amounts to showing that its unit section is quasi-compact, or
that its image $S_{0}$ is a \emph{retro-compact} open of $\Pi_{S}$.
So, for an affine open $U=\Spec(\mathcal{O})$ of $S$ and $d\in\mathcal{D}_{S}(U)$,
we have to show that $S_{0}\cap U_{d}$ is quasi-compact; this is
isomorphic to the open of $U$ where $d\equiv0$, which is the intersection
of the opens $U(\pm d\geq0)$ where $\pm d\geq0$, so it is sufficient
to establish that $U(d\geq0)$ is quasi-compact, and we may assume
that $d$ is principal, say $d=\mathrm{div}(a/b)$ with $a,b\in\Reg(\mathcal{O})$.
Then $p\in\Spec(\mathcal{O})$ belongs to $U(d\geq0)$ if and only
there is an $f\in\mathcal{O}\setminus p$ such that $b\mid fa$ in
$\mathcal{O}$, i.e.~if and only if $(\mathcal{O}b:\mathcal{O}a)=\{\lambda\in\mathcal{O}:\lambda a\in\mathcal{O}b\}$
is not contained in $p$. Since $\mathcal{O}$ is coherent by assumption,
$(\mathcal{O}b:\mathcal{O}a)$ is finitely generated. Thus $U(d\geq0)$
is the union of finitely many principal opens of $U$, whence indeed
quasi-compact. 
\end{proof}
\begin{lem}
Let $A$ be an $S$-valuation ring with fraction field $K$. Then
the kernel of $\mathbb{G}_{m,S}^{lft}(A)\rightarrow\mathbb{G}_{m,S}^{lft}(K)$
is canonically isomorphic to the kernel of 
\[
\ker\left(\mathcal{D}_{S,s}\rightarrow\mathcal{D}_{S,s'}\right)\stackrel{\delta_{s,s'}}{\longrightarrow}\mathrm{coker}\left(\mathcal{O}_{S,s}^{\times}\rightarrow\mathcal{O}_{S,s'}^{\times}\right)\stackrel{\mathrm{can}}{\longrightarrow}\mathrm{coker}\left(A^{\times}\rightarrow K^{\times}\right)
\]
where $s'$ and $s\in S$ are the images of the generic and closed
points of $\Spec(A)$, while $\delta_{s,s'}$ is the edge morphism
associated to $s'\rightsquigarrow s$ in Remark~\ref{rem:EdgeMorph4Specializations}.
\end{lem}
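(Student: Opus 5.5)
Both $\Spec(A)$ and $\Spec(K)$ are local $S$-schemes, so we can compute with Remarks~\ref{rem:PointsOfPiSOverLocals} and~\ref{rem:PointsOfGmLFToverLocals}. The plan is to write points of $\mathbb{G}_{m,S}^{lft}$ over these local schemes as pairs $(d,\delta)$ and read off the kernel by hand.

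First step: describe the points.
\begin{itemize}
\item An element of $\mathbb{G}_{m,S}^{lft}(A)$ is a pair $(d,\delta)$ with $d\in\mathcal{D}_{S,s}$ and $\delta$ an $A$-basis of $\mathcal{L}(d)\otimes_{\mathcal{O}_{S,s}}A$.
\item An element of $\mathbb{G}_{m,S}^{lft}(K)$ is a pair $(d',\delta')$ with $d'\in\mathcal{D}_{S,s'}$ and $\delta'$ a $K$-basis of $\mathcal{L}(d')\otimes_{\mathcal{O}_{S,s'}}K$.
\item The restriction map sends $(d,\delta)$ to $(d_{s'},\delta\otimes1)$. Here $d_{s'}$ is the image of $d$ under the cospecialization $\mathcal{D}_{S,s}\to\mathcal{D}_{S,s'}$, which is induced by $\mathcal{M}_{S,s}\to\mathcal{M}_{S,s'}$. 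The point is that $\Spec(K)\to\Spec(A)\to\Pi_{S}$ factors through the local ring of $\Pi_S$ at $d$, and that local ring maps to $s'$ through the open $U_{d}$.
\end{itemize}
I would check carefully that this identification is compatible with $\mathcal{L}(d)\otimes_{\mathcal{O}_{S,s}}\mathcal{O}_{S,s'}=\mathcal{L}(d_{s'})$ inside $\mathcal{M}_{S,s'}$.

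Second step: compute the kernel. Choose a lift $m\in\mathcal{M}_{S,s}^{\times}$ of $d$, so $\mathcal{L}(d)=\mathcal{O}_{S,s}m$. Then $\delta=u\cdot(m\otimes1)$ for a unique $u\in A^{\times}$. The pair $(d,\delta)$ lies in the kernel exactly when two conditions hold.
\begin{itemize}
\item $d_{s'}=0$, i.e.\ $d\in\ker(\mathcal{D}_{S,s}\to\mathcal{D}_{S,s'})$.
\item $\delta\otimes1$ is the canonical basis $1$ of $\mathcal{L}(0)\otimes K=K$.
\end{itemize}
When $d_{s'}=0$, the image $m'$ of $m$ in $\mathcal{M}_{S,s'}^{\times}$ lies in $\mathcal{O}_{S,s'}^{\times}$. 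By construction of the snake map, $\delta_{s,s'}(d)$ is the class of $m'$ in $\mathcal{O}_{S,s'}^{\times}/\mathcal{O}_{S,s}^{\times}$. The identification $\mathcal{L}(d_{s'})\otimes K=\mathcal{O}_{S,s'}m'\otimes K=K$ sends $m\otimes1$ to the image $\bar m'$ of $m'$ in $K^{\times}$. So the second condition reads $u\,\bar m'=1$ in $K^{\times}$.

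Third step: conclude. The pair $(d,\delta)$ is in the kernel iff $d$ lies in $\ker(\mathcal{D}_{S,s}\to\mathcal{D}_{S,s'})$ and $\bar m'=u^{-1}\in A^{\times}$. The second condition says exactly that $\mathrm{can}(\delta_{s,s'}(d))=0$ in $K^{\times}/A^{\times}$. This is independent of the lift $m$, since changing $m$ by an element of $\mathcal{O}_{S,s}^{\times}$ changes $\bar m'$ by an element of $A^{\times}$.

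Conversely, given such a $d$, the element $u$, and hence $\delta$, is uniquely determined by $u=\bar m'^{-1}$. So $(d,\delta)\mapsto d$ is a bijection from the kernel onto $\ker(\mathrm{can}\circ\delta_{s,s'})$. It is a homomorphism because the group law on $(d,\delta)$ pairs adds the $d$'s.

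Canonicity requires no choices beyond $m$, and the result is independent of $m$. The main obstacle is the first step: making the pullback $a^{\ast}\mathcal{L}$ along $\Spec(K)\to\Pi_{S}$ explicit. One has to verify that the fibre of $\mathcal{L}$ at the image point is $\mathcal{L}(d)\otimes\mathcal{O}_{S,s'}\subset\mathcal{M}_{S,s'}$, with the identification with $\mathcal{O}_{S,s'}$ when $d_{s'}=0$ given by the canonical trivialization of $\mathcal{L}$ on the unit section. Both facts follow from the definition of $\mathcal{L}$ on the opens $U_{d}$, because the point of $\Pi_S$ over $s'$ lies in $U_{d}$.
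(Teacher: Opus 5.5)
Your proof is correct and is essentially the paper's argument. The paper applies the snake lemma to the map of short exact sequences $1\to\mathbb{G}_{m}\to\mathbb{G}_{m,S}^{lft}\to\Pi_{S}\to1$ evaluated at $A\to K$, using that $A^{\times}\to K^{\times}$ is injective, and then factors the edge map as $\mathrm{can}\circ\delta_{s,s'}$ by naturality with respect to $\mathcal{O}_{S,s}\to\mathcal{O}_{S,s'}$. You instead compute that connecting map element by element via Remark~\ref{rem:PointsOfGmLFToverLocals} and a lift $m\in\mathcal{M}_{S,s}^{\times}$ of $d$, which also makes explicit the compatibility of $\mathcal{L}(d)_{s'}=\mathcal{O}_{S,s'}m'$ with the trivialization of $\mathcal{L}$ along the unit section.
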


\begin{proof}
Since $A$ and $K$ are local, we have a commutative diagram of exact
sequences
\[
\xymatrix{1\ar[r] & \mathbb{G}_{m}(A)\ar[r]\ar[d] & \mathbb{G}_{m,S}^{lft}(A)\ar[r]\ar[d]^{\star} & \Pi_{S}(A)\ar[r]\ar[d] & 1\\
1\ar[r] & \mathbb{G}_{m}(K)\ar[r] & \mathbb{G}_{m,S}^{lft}(K)\ar[r] & \Pi_{S}(K)\ar[r] & 1
}
\]
The snake lemma then identifies the kernel of $\star$ with the kernel
of the edge map
\[
\delta:\ker\left(\Pi_{S}(A)\rightarrow\Pi_{S}(K)\right)\rightarrow\mathrm{coker}\left(A^{\times}\rightarrow K^{\times}\right).
\]
On the other hand, $\Pi_{S}(A)\rightarrow\Pi_{S}(K)$ is the localization
map $\mathcal{D}_{S,s}\rightarrow\mathcal{D}_{S,s'}$ by Remark~\ref{rem:PointsOfPiSOverLocals}.
The fact that $\delta$ factors as indicated can be seen by replacing
$A\hookrightarrow K$ by $\mathcal{O}_{S,s}\rightarrow\mathcal{O}_{S,s'}$
in the above diagram, and using Remark~\ref{rem:PointsOfGmLFToverLocals}
to identify the resulting exact sequences with those used in the definition
of $\delta_{s,s'}$. 
\end{proof}
\begin{prop}
\label{prop:SeparatedCase}Suppose that $S$ is locally coherent with
uniserial local rings. Then 
\[
\mathbb{G}_{m,S}^{lft}\text{ is a \emph{separated} \ensuremath{S}-group scheme.}
\]
\end{prop}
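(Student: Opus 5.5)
The plan is to combine Lemma~\ref{lem:GmlftQuasiSepIfSloccoh} with the valuative criterion of separatedness. Since $S$ is locally coherent, Lemma~\ref{lem:GmlftQuasiSepIfSloccoh} shows that $\mathbb{G}_{m,S}^{lft}\rightarrow S$ is quasi-separated. For a quasi-separated morphism, separatedness can be tested with valuation rings: it suffices to show that for every $S$-valuation ring $A$ with fraction field $K$, the map $\mathbb{G}_{m,S}^{lft}(A)\rightarrow\mathbb{G}_{m,S}^{lft}(K)$ is injective. Because this map is a group homomorphism, it is enough to show that its kernel is trivial. By the preceding lemma, this kernel is identified with the kernel of $\mathrm{can}\circ\delta_{s,s'}$ on $\ker(\mathcal{D}_{S,s}\rightarrow\mathcal{D}_{S,s'})$. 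So we must show: if $d\in\mathcal{D}_{S,s}$ dies in $\mathcal{D}_{S,s'}$ and $\mathrm{can}(\delta_{s,s'}(d))=1$ in $K^{\times}/A^{\times}$, then $d=0$.

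The first step is to use the uniserial hypothesis to make $\mathcal{D}_{S,s}$ totally ordered. In the uniserial local ring $R=\mathcal{O}_{S,s}$, for regular $a,b$ the principal ideals $Ra$ and $Rb$ are comparable, so one of $a,b$ divides the other. Hence the divisibility group $\mathcal{D}(R)$ is totally ordered. The convex embedding $\mathcal{D}_{S,s}\hookrightarrow\mathcal{D}(R)$ recalled earlier is in particular an order embedding, so $\mathcal{D}_{S,s}$ is also totally ordered. Since the kernel condition is stable under $d\mapsto-d$, we may therefore assume $d\geq0$. Then $d=\mathrm{div}(x)$ for some $x\in\mathcal{O}_{S,s}^{reg}$.

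Next I would unwind $\delta_{s,s'}$ via the snake lemma of Remark~\ref{rem:EdgeMorph4Specializations}. Take $x\in\mathcal{M}_{S,s}^{\times}$ as a lift of $d$. Because $d$ vanishes in $\mathcal{D}_{S,s'}$, the image of $x$ in $\mathcal{M}_{S,s'}^{\times}$ lies in $\mathcal{O}_{S,s'}^{\times}$. Then $\delta_{s,s'}(d)$ is the class of that image, and $\mathrm{can}(\delta_{s,s'}(d))$ is the class of the image of $x$ in $K^{\times}$ modulo $A^{\times}$. Here I will need to check, as in the proof of the lemma, that the maps $\mathcal{O}_{S,s}\rightarrow A$ and $\mathcal{O}_{S,s'}\rightarrow K$ are compatible on these elements. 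The hypothesis that this class is trivial then says that the image of $x$ under the local homomorphism $\mathcal{O}_{S,s}\rightarrow A$ is a unit. By locality, $x$ does not lie in the maximal ideal of $\mathcal{O}_{S,s}$, so $x\in\mathcal{O}_{S,s}^{\times}$ and $d=0$.

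I expect the main obstacle to be this bookkeeping: identifying $\mathrm{can}\circ\delta_{s,s'}$ explicitly with ``image of a lift in $K^{\times}/A^{\times}$'', and verifying that the image of $x$ in $K$ really comes from $\mathcal{O}_{S,s}\rightarrow A\subset K$. Once that is done, the total order coming from uniseriality does the rest.
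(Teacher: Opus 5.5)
Your proof is correct, but it takes a different and shorter route than the paper's. Both arguments use Lemma~\ref{lem:GmlftQuasiSepIfSloccoh}, the valuative criterion and the preceding lemma to reduce to the triviality of $\ker(\mathrm{can}\circ\delta_{s,s'})$.

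The paper then proves two separate injectivity statements:
\begin{itemize}
\item First, $\delta_{s,s'}$ is injective. It writes $x=a/b$, uses uniseriality to assume $x\in\mathcal{O}=\mathcal{O}_{S,s}$, and then uses that the unique minimal prime of $\mathcal{O}$ is its nilradical to get $x\in 1+\mathrm{nil}(\mathcal{O})\subset\mathcal{O}^{\times}$.
\item Second, $\mathrm{can}$ is injective on all of $\mathrm{coker}(\mathcal{O}^{\times}\to\mathcal{O}_{p}^{\times})$. This uses that $\mathcal{O}/p$ is a valuation ring dominated by, hence equal to, $A\cap k(p)$, and that $p\to p\mathcal{O}_{p}$ is surjective.
\end{itemize}

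You instead apply the uniseriality trick once, directly to an element $d$ of the kernel of the composite. Since $\mathcal{D}_{S,s}$ is totally ordered, you may take $d=\mathrm{div}(x)$ with $x\in\mathcal{O}_{S,s}^{reg}$. The locality of $\mathcal{O}_{S,s}\to A$ then finishes the argument immediately.

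The bookkeeping you flag is harmless. By naturality of the snake lemma, $\mathrm{can}\circ\delta_{s,s'}(d)$ is the class of the image of $x$ under $\mathcal{O}_{S,s}\to\mathcal{O}_{S,s'}\to K$. This composite equals $\mathcal{O}_{S,s}\to A\hookrightarrow K$, since both are the maps on stalks induced by $\Spec(K)\to\Spec(A)\to S$.

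What each route buys:
\begin{itemize}
\item Your route avoids minimal primes, the prime $p$, and the domination argument altogether. Beyond local coherence, it only uses that any two elements of $\mathcal{O}_{S,s}^{reg}$ are comparable for divisibility (i.e.\ that $\mathcal{D}_{S,s}$ is totally ordered), which is weaker than uniseriality.
\item The paper's route gives finer information: $\delta_{s,s'}$ and $\mathrm{can}$ are each injective on their own. That is more than separatedness requires.
\end{itemize}
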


\begin{proof}
Since $S$ is locally coherent, $\mathbb{G}_{m,S}^{lft}\rightarrow S$
is quasi-separated by lemma~\ref{lem:GmlftQuasiSepIfSloccoh}. By
the valuative criterion of separatedness, it remains to establish
that for any $S$-valuation ring $A$ with fraction field $K$, the
kernel of $\mathbb{G}_{m,S}^{lft}(A)\rightarrow\mathbb{G}_{m,S}^{lft}(K)$
is trivial. Let $s'$ and $s\in S$ be the images of the generic and
closed points of $\Spec(A)$. 

We first show that the edge map $\delta_{s,s'}$ from the previous
lemma is injective, i.e.~that any element $x\in\ker(\mathcal{M}_{S,s}^{\times}\rightarrow\mathcal{M}_{S,s'}^{\times})$
belongs to $\mathcal{O}_{S,s}^{\times}$. Since $x\in\mathcal{M}_{S,s}^{\times}$,
we may write $x=a/b$ with $a,b\in\mathcal{O}_{S,s}^{reg}$. Since
$\mathcal{O}=\mathcal{O}_{S,s}$ is uniserial, either $a\mid b$ or
$b\mid a$ in $\mathcal{O}$. Changing $x$ to $x^{-1}$, we may therefore
assume that $x$ belongs to $\mathcal{O}$. Since it then maps to
$1$ in $\mathcal{O}_{p}$, where $p=\ker(\mathcal{O}\rightarrow A)$
is the prime corresponding to $s'$, there is an $f\in\mathcal{O}\setminus p$
such that $f(x-1)=0$ in $\mathcal{O}$. But $\mathcal{O}$ being
uniserial has a unique minimal prime $p_{0}$, which must be the nilradical
of $\mathcal{O}$. Since $f\notin p_{0}$, we find that $x-1\in p_{0}$,
i.e.~$x=1+\epsilon$ for some nilpotent $\epsilon$ of $\mathcal{O}$;
thus indeed $x\in\mathcal{O}^{\times}$.

We now consider the following commutative diagram:
\[
\xyC{2pc}\xymatrix{1+p\ar[d]\ar@{^{(}->}[r] & \mathcal{O}^{\times}\ar@{->>}[r]\ar[d] & \left(\mathcal{O}/p\right)^{\times}\ar@{^{(}->}[d]\ar@{^{(}->}[r] & A^{\times}\ar@{^{(}->}[d]\\
1+p\mathcal{O}_{p}\ar@{^{(}->}[r] & \mathcal{O}_{p}^{\times}\ar@{->>}[r] & \left(\mathcal{O}_{p}/p\mathcal{O}_{p}\right)^{\times}\ar@{^{(}->}[r] & K^{\times}
}
\]
The last square is cartesian: $\mathcal{O}/p$ is a uniserial domain,
whence a valuation ring, with fraction field $k(p)=\mathcal{O}_{p}/p\mathcal{O}_{p}$;
it is dominated by the local ring $A\cap k(p)$, and thus equals it;
so $(\mathcal{O}/p)^{\times}=(A\cap k(p))^{\times}=A^{\times}\cap k(p)^{\times}$
in $K^{\times}$. It follows that 
\[
\mathrm{coker}\left(\left(\mathcal{O}/p\right)^{\times}\hookrightarrow(\mathcal{O}_{p}/p\mathcal{O}_{p})^{\times}\right)\rightarrow\mathrm{coker}\left(A^{\times}\hookrightarrow K^{\times}\right)
\]
is injective. On the other hand, the first two squares identify the
kernel of 
\[
\mathrm{coker}\left(\mathcal{O}^{\times}\hookrightarrow\mathcal{O}_{p}^{\times}\right)\rightarrow\mathrm{coker}\left(\left(\mathcal{O}/p\right)^{\times}\hookrightarrow(\mathcal{O}_{p}/p\mathcal{O}_{p})^{\times}\right)
\]
with the cokernel of $1+p\rightarrow1+p\mathcal{O}_{p}$. So it remains
to establish that the latter is trivial, i.e.~that $\mathcal{O}\rightarrow\mathcal{O}_{p}$
induces a surjection $p\rightarrow p\mathcal{O}_{p}$. This is clear:
take $x\in p\mathcal{O}_{p}$ and write it as $x=a/b$ in $\mathcal{O}_{p}$
with $a\in p$ and $b\in\mathcal{O}\setminus p$; if $\mathcal{O}b\subset\mathcal{O}a$,
then $b\in p$, a contradiction; so $\mathcal{O}a\subset\mathcal{O}b$,
i.e. $a=bc$ for some $c\in\mathcal{O}$, with $c\in p$ since $bc=a\in p$
with $b\notin p$; so $x=bc/b=c/1$ in $\mathcal{O}_{p}$.
\end{proof}
\begin{example}
Since Prüfer domains are coherent rings whose local rings are valuation
domains, the proposition applies to their spectra, and more generally
to any scheme with an open cover by spectra of Prüfer domains. 
\end{example}

\section{Functoriality and Convexity}

\subsection{The morphism $\theta$\protect\label{subsec:TheMorphTheta}}

Recall that $\mathcal{L}$ is a subsheaf of $\pi^{\ast}\mathcal{M}_{S}$.
Thus for any morphism $f:T\rightarrow S$ and $(a,\alpha)\in\mathbb{G}_{m,S}^{lft}(T)$,
$a^{\ast}(\mathcal{L}\hookrightarrow\pi^{\ast}\mathcal{M}_{S})$ maps
$\alpha$ to a section $\theta(a,\alpha)$ of $a^{\ast}\pi^{\ast}\mathcal{M}_{S}=f^{\ast}\mathcal{M}_{S}$
over $T$. For $(a,\alpha)$ and $(b,\beta)\in\mathbb{G}_{m,S}^{lft}(T)$,
we find that 
\[
\theta(a,\alpha)\theta(b,\beta)=\theta(a+b,\alpha\cdot\beta)\quad\text{and}\quad\theta(0,1)=1.
\]
This construction is plainly functorial in $f$. We thus obtain a
morphism 
\[
\theta:\mathbb{G}_{m,S}^{lft}\rightarrow\Res_{\eta/S}(\mathbb{G}_{m,\eta})
\]
of Zariski sheaves of groups on $\Sch_{S}$, where $\Res_{\eta/S}(\mathbb{G}_{m,\eta})$
is the sheaf of invertible elements in the Zariski sheaf $\Res_{\eta/S}(\mathbb{G}_{a,\eta})$
of $\mathcal{O}_{S}$-algebras which takes $f:T\rightarrow S$ to
the $\Gamma(T,\mathcal{O}_{T})$-algebra $\Gamma(T,f^{\ast}\mathcal{M}_{S})$.
The restriction of $\theta$ to the neutral component $\mathbb{G}_{m,S}$
of $\mathbb{G}_{m,S}^{lft}$ is the morphism $\mathbb{G}_{m,S}\rightarrow\Res_{\eta/S}(\mathbb{G}_{m,\eta})$
which takes $f$ to 
\[
\mathbb{G}_{m,S}(T)=\Gamma(T,\mathcal{O}_{T})^{\times}\rightarrow\Gamma(T,f^{\ast}\mathcal{M}_{S})^{\times}=\Res_{\eta/S}(\mathbb{G}_{m,\eta})(T).
\]

\begin{rem}
\label{rem:M_SIsRes}Suppose that our sheaf of $\mathcal{O}_{S}$-algebras
$\mathcal{M}_{S}$ is quasi-coherent and set $\eta=\Spec(\mathcal{M}_{S})$,
with affine structure morphism $\iota:\eta\rightarrow S$. For any
$f:T\rightarrow S$ as above, we may now form the following cartesian
diagram of schemes:
\[
\xymatrix{T_{\eta}\ar[r]^{f'}\ar[d]_{\iota'} & \eta\ar[d]^{\iota}\\
T\ar[r]^{f} & S
}
\]
Since $\iota$ is affine, $f^{\ast}\mathcal{M}_{S}=f^{\ast}\iota_{\ast}\mathcal{O}_{\eta}=\iota_{\ast}^{\prime}f^{\prime\ast}\mathcal{O}_{\eta}=\iota_{\ast}^{\prime}\mathcal{O}_{T_{\eta}}$.
Thus 
\[
\Res_{\eta/S}(\mathbb{G}_{a,\eta})(T)=\mathbb{G}_{a}(T_{\eta})\qquad\text{and}\qquad\Res_{\eta/S}(\mathbb{G}_{m,\eta})(T)=\mathbb{G}_{m}(T_{\eta}).
\]
In other words, $\Res_{\eta/S}(\mathbb{G}_{a,\eta})$ and $\Res_{\eta/S}(\mathbb{G}_{m,\eta})$
are then respectively the Weil restriction sheaves of $\mathbb{G}_{a,\eta}$
and $\mathbb{G}_{m,\eta}$ along $\iota:\eta\rightarrow S$, as suggested
by the notation. 
\end{rem}

\subsection{Convexity, I}

Let again $f:T\rightarrow S$ be a morphism of schemes. 

\subsubsection{~}

We can then form the following commutative diagram of exact sequences
of sheaves of groups on the small Zariski site $T_{Zar}$ of $T$:
\[
\xyC{1pc}\xymatrix{1\ar[r] & f^{-1}\mathcal{O}_{S}^{\times}\ar[r]\ar[d] & f^{-1}\mathcal{M}_{S}^{\times}\ar[d]\ar[r] & f^{-1}\mathcal{D}_{S}\ar@{=}[d]\ar[r] & 1\\
1\ar[r] & \mathcal{O}_{T}^{\times}\ar[r]\ar@{=}[d] & \mathcal{O}_{T}^{\times}\coprod_{f^{-1}\mathcal{O}_{S}^{\times}}f^{-1}\mathcal{M}_{S}^{\times}\ar[r]\ar[d]_{\vartheta} & f^{-1}\mathcal{D}_{S}\ar[r]\ar[d] & 1\\
1\ar@{..>}[r] & \mathcal{O}_{T}^{\times}\ar[r]\ar@{=}[d] & f^{\ast}\mathcal{M}_{S}^{\times}\ar@{..>}[d]\ar[r] & f^{\ast}\mathcal{M}_{S}^{\times}/\mathrm{im}\left(\mathcal{O}_{T}^{\times}\right)\ar@{..>}[d]\ar[r] & 1\\
1\ar[r] & \mathcal{O}_{T}^{\times}\ar[r] & \mathcal{M}_{T}^{\times}\ar[r] & \mathcal{D}_{T}\ar[r] & 1
}
\]
For the first line, we apply the exact functor $f^{-1}:\Shv(S_{Zar})\rightarrow\Shv(T_{Zar})$
to the exact sequence defining $\mathcal{D}_{S}=\mathcal{M}_{S}^{\times}/\mathcal{O}_{S}^{\times}$;
note here that $f^{-1}(\mathcal{O}_{S}^{\times}\hookrightarrow\mathcal{O}_{S})$
and $f^{-1}(\mathcal{M}_{S}^{\times}\hookrightarrow\mathcal{M}_{S})$
respectively identify $f^{-1}(\mathcal{O}_{S}^{\times})$ with $(f^{-1}\mathcal{O}_{S})^{\times}\hookrightarrow f^{-1}\mathcal{O}_{S}$
and $f^{-1}(\mathcal{M}_{S}^{\times})$ with $(f^{-1}\mathcal{M}_{S})^{\times}\hookrightarrow f^{-1}\mathcal{M}_{S}$,
justifying our ambiguous notations. The second line is obtained by
pushout along the morphism $f^{-1}\mathcal{O}_{S}^{\times}\rightarrow\mathcal{O}_{T}^{\times}$
induced by the structure morphism $f^{-1}\mathcal{O}_{S}\rightarrow\mathcal{O}_{T}$.
The morphism to the third line is induced by the $f^{-1}\mathcal{O}_{S}$-algebra
morphisms from $\mathcal{O}_{T}$ and $f^{-1}\mathcal{M}_{S}$ to
their tensor product 
\[
f^{\ast}\mathcal{M}_{S}=\mathcal{O}_{T}\otimes_{f^{-1}\mathcal{O}_{S}}f^{-1}\mathcal{M}_{S}.
\]
The dotted arrows to the fourth line only exist when $f^{-1}\mathcal{O}_{S}\rightarrow\mathcal{O}_{T}$
extends to $f^{-1}\mathcal{M}_{S}\rightarrow\mathcal{M}_{T}$, inducing
a morphism of $\mathcal{O}_{T}$-algebras $f^{\ast}\mathcal{M}_{S}\rightarrow\mathcal{M}_{T}$,
which restricts to $f^{\ast}\mathcal{M}_{S}^{\times}\rightarrow\mathcal{M}_{T}^{\times}$
-- it forces injectivity of $\mathcal{O}_{T}\rightarrow f^{\ast}\mathcal{M}_{S}$
and $\mathcal{O}_{T}^{\times}\rightarrow f^{\ast}\mathcal{M}_{S}^{\times}$.
This holds when $f$ is flat: the structure morphism $\mathcal{O}_{S}\rightarrow f_{\ast}\mathcal{O}_{T}$
then maps $\mathcal{O}_{S}^{reg}$ into $f_{\ast}\mathcal{O}_{T}^{reg}$,
giving a morphism $\mathcal{M}_{S}\rightarrow f_{\ast}\mathcal{M}_{T}$
whose adjoint is the desired extension.

\subsubsection{~}

We claim that the second line of our diagram may be identified with
the restriction of the connected-étale short exact sequence of $\mathbb{G}_{m,S}^{lft}$,
from the big Zariski site of $S$ to the small Zariski site of $T$,
namely the short exact sequence 
\begin{equation}
\xymatrix{1\ar[r] & \mathbb{G}_{m,S}\vert_{T_{Zar}}\ar[r] & \mathbb{G}_{m,S}^{lft}\vert_{T_{Zar}}\ar[r] & \Pi_{S}\vert_{T_{Zar}}\ar[r] & 1}
.\label{eq:RestOfConnecEtalSeq2TZar}
\end{equation}
Since $\mathbb{G}_{m,S}\vert_{T_{Zar}}=\mathcal{O}_{T}^{\times}$,
we just have to extend $f^{-1}\mathcal{O}_{S}^{\times}\rightarrow\mathcal{O}_{T}^{\times}$
to a morphism $f^{-1}\mathcal{M}_{S}^{\times}\rightarrow\mathbb{G}_{m,S}^{lft}\vert_{T_{Zar}}$;
by adjunction, this amounts to extending $\mathcal{O}_{S}^{\times}\rightarrow f_{\ast}\mathcal{O}_{T}^{\times}$
to a morphism $\mathcal{M}_{S}^{\times}\rightarrow f_{\ast}(\mathbb{G}_{m,S}^{lft}\vert_{T_{Zar}})$.
The latter is given on an open $U$ of $S$ by 
\[
\mathcal{M}_{S}^{\times}(U)\ni x\mapsto(a,\alpha)\in\mathbb{G}_{m,S}^{lft}(f^{-1}(U))
\]
where $a:f^{-1}(U)\rightarrow\Pi_{S}$ is the composition of $f^{-1}(U)\rightarrow U$
with the section $d:U\rightarrow\Pi_{S}$ corresponding to the image
$d=\mathrm{div}(x)$ of $x$ in $\mathcal{D}_{S}(U)=\Hom_{S}(U,\Pi_{S})$,
and $\alpha=f^{\ast}(x)$, which is indeed a generating section of
$a^{\ast}\mathcal{L}=f^{\ast}(\mathcal{O}_{U}x)$ over $f^{-1}(U)$. 

The resulting identification of $\Pi_{S}\vert_{T_{Zar}}$ with $f^{-1}\mathcal{D}_{S}$
is well-known: the espace étalé of the pull-back sheaf $f^{-1}\mathcal{D}_{S}$
equals $\Pi_{S}\times_{S}T$. Similarly, one checks that the restriction
of the morphism $\theta:\mathbb{G}_{m,S}^{lft}\rightarrow\Res_{\eta/S}(\mathbb{G}_{m,\eta})$
from section~\ref{subsec:TheMorphTheta} to $T_{Zar}$ becomes the
middle multiplication morphism $\vartheta:\mathcal{O}_{T}^{\times}\coprod_{f^{-1}\mathcal{O}_{S}^{\times}}f^{-1}\mathcal{M}_{S}^{\times}\rightarrow f^{\ast}\mathcal{M}_{S}^{\times}$.

\subsubsection{The flat case}

Suppose that $f:T\rightarrow S$ is flat. We may then retrieve the
exact sequence (\ref{eq:RestOfConnecEtalSeq2TZar}) by pull-back of
the third or fourth line of our bigger diagram under the appropriate
morphism from $\Pi_{S}\vert_{T_{Zar}}$ to the relevant last term:
\[
\xyC{1pc}\xymatrix{1\ar[r] & \mathbb{G}_{m,S}\vert_{T_{Zar}}\ar[r]\ar@{=}[d] & \mathbb{G}_{m,S}^{lft}\vert_{T_{Zar}}\ar[r]\ar[d]_{\vartheta} & \Pi_{S}\vert_{T_{Zar}}=f^{-1}\mathcal{D}_{S}\ar[r]\ar[d] & 1\\
1\ar[r] & \mathcal{O}_{T}^{\times}\ar[r]\ar@{=}[d] & f^{\ast}\mathcal{M}_{S}^{\times}\ar[d]\ar[r] & f^{\ast}\mathcal{M}_{S}^{\times}/\mathcal{O}_{T}^{\times}\ar[d]\ar[r] & 1\\
1\ar[r] & \mathcal{O}_{T}^{\times}\ar[r] & \mathcal{M}_{T}^{\times}\ar[r] & \mathcal{D}_{T}\ar[r] & 1
}
\]
Moreover, all vertical maps in this diagram are injective. Indeed,
the stalks of the last column morphisms at $t\in T$ above $s\in S$
are given by
\[
\mathcal{D}_{S,s}=\mathcal{M}_{S,s}^{\times}/\mathcal{O}_{S,s}^{\times}\rightarrow\left(\mathcal{O}_{T,t}\otimes_{\mathcal{O}_{S,s}}\mathcal{M}_{S,s}\right)^{\times}/\mathcal{O}_{T,t}^{\times}\rightarrow\mathcal{M}_{T,t}^{\times}/\mathcal{O}_{T,t}^{\times}=\mathcal{D}_{T,t}.
\]
So the claimed injectivity derives from the following lemma, which
also shows that the poset structure on $\Pi_{S}\vert_{T_{Zar}}$ is
induced by the poset structure on $\mathcal{D}_{T}$. 
\begin{lem}
With notations as above, we have embeddings 
\[
\mathcal{M}_{S,s}\hookrightarrow\mathcal{O}_{T,t}\otimes_{\mathcal{O}_{S,s}}\mathcal{M}_{S,s}\hookrightarrow\mathcal{M}_{T,t}.
\]
Moreover, taking intersections in $\mathcal{O}_{T,t}\otimes_{\mathcal{O}_{S,s}}\mathcal{M}_{S,s}$
or $\mathcal{M}_{T,t}$, we have 
\[
\mathcal{O}_{S,s}=\mathcal{M}_{S,s}\cap\mathcal{O}_{T,t},\quad\mathcal{O}_{S,s}^{reg}=\mathcal{M}_{S,s}^{\times}\cap\mathcal{O}_{T,t}=\mathcal{M}_{S,s}^{\times}\cap\mathcal{O}_{T,t}^{reg},\quad\mathcal{O}_{S,s}^{\times}=\mathcal{M}_{S,s}^{\times}\cap\mathcal{O}_{T,t}^{\times}.
\]
\end{lem}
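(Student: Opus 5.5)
Write $A=\mathcal{O}_{S,s}$, $B=\mathcal{O}_{T,t}$, $\Sigma=A^{reg}=\mathcal{O}_{S,s}^{reg}$, so that $\mathcal{M}_{S,s}=\Sigma^{-1}A$ and $B\otimes_A\mathcal{M}_{S,s}=\Sigma^{-1}B$, where $\Sigma$ acts through $A\to B$. The map $A\to B$ is a flat local homomorphism, hence faithfully flat. I would first establish the chain of embeddings and then deduce the three intersection identities from faithful flatness.

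\emph{Embeddings.} Since $A\to B$ is injective (faithful flatness), $\Sigma^{-1}A\to\Sigma^{-1}B=B\otimes_A\Sigma^{-1}A$ is injective, because localization is exact. For the second map, I need $\Sigma$ to land in $B^{reg}=\mathcal{O}_{T,t}^{reg}$. Choose an open $U\ni s$ and $a\in\mathcal{O}_S^{reg}(U)$ representing a given element of $\Sigma$. By flatness of $f$, multiplication by $f^\ast a$ on $\mathcal{O}_{f^{-1}U}=f^\ast\mathcal{O}_U$ is the pullback of the monomorphism $a:\mathcal{O}_U\to\mathcal{O}_U$, hence is again a monomorphism. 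So $f^\ast a\in\mathcal{O}_T^{reg}(f^{-1}U)$, and its germ lies in $B^{reg}$. This gives $\Sigma^{-1}B\to (B^{reg})^{-1}B=\mathcal{M}_{T,t}$, which is injective because elements of $\Sigma$ are regular in $B$, so $B\hookrightarrow\Sigma^{-1}B$. Also $\Sigma^{-1}B\subset \mathcal{Q}(B)$, and the map into $\mathcal{M}_{T,t}\subset\mathcal{Q}(B)$ is compatible with these inclusions.

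\emph{Intersections.} The identity $\mathcal{M}_{S,s}\cap B=A$ is the main step, and faithful flatness gives it. Take $x=a/\sigma$ with $a\in A$, $\sigma\in\Sigma$, and suppose $x\in B$; then $a\in\sigma B\cap A$. For a faithfully flat extension, $IB\cap A=I$ for every ideal $I$; applying this to $I=\sigma A$ gives $a=\sigma a'$ with $a'\in A$, so $x=a'\in A$. Both ambient rings in the statement work equally well, since $\Sigma^{-1}B\hookrightarrow\mathcal{M}_{T,t}$ is injective and $B$ sits in both.

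For the identity for $\mathcal{O}_{S,s}^{reg}$, recall from \S2 that $A^{reg}=A\cap\mathcal{M}_{S,s}^{\times}$. Then
\[
\mathcal{M}_{S,s}^{\times}\cap B=\mathcal{M}_{S,s}^{\times}\cap(\mathcal{M}_{S,s}\cap B)=\mathcal{M}_{S,s}^{\times}\cap A=\Sigma .
\]
Moreover $\Sigma\subset B^{reg}$ was shown above, and $B^{reg}\subset B$, which yields the second equality. For units, $x\in\mathcal{M}_{S,s}^{\times}\cap B^{\times}$ means that both $x$ and $x^{-1}$ lie in $\mathcal{M}_{S,s}\cap B=A$, so $x\in A^{\times}$. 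The reverse inclusions in all three identities are clear.

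The only delicate point is checking that $\Sigma$ maps into $B^{reg}$, which uses flatness on an open neighbourhood. It cannot be done merely at stalks, because the definition of $\mathcal{O}^{reg}$ is local on an open set. The pullback argument handles this.
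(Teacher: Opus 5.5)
Your proposal is correct and follows essentially the same route as the paper. Faithful flatness of the local map $\mathcal{O}_{S,s}\to\mathcal{O}_{T,t}$ gives the embeddings, and applying $IB\cap A=I$ to $I=\sigma A$ gives $\mathcal{M}_{S,s}\cap\mathcal{O}_{T,t}=\mathcal{O}_{S,s}$, from which you deduce the other two identities exactly as the paper does. The only differences are cosmetic. You get the first embedding from exactness of localization rather than from injectivity of the composite into $\mathcal{M}_{T,t}$. You also prove explicitly, by pulling back the monomorphism $a\colon\mathcal{O}_U\to\mathcal{O}_U$ along the flat $f$, the inclusion $\mathcal{O}_{S,s}^{reg}\subset\mathcal{O}_{T,t}^{reg}$ that the paper takes from its preceding discussion.
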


\begin{proof}
First note that $\mathcal{O}_{T,t}\otimes_{\mathcal{O}_{S,s}}\mathcal{M}_{S,s}\rightarrow\mathcal{M}_{T,t}$
is injective since it is a localization of $\mathcal{O}_{T,t}\rightarrow\mathcal{M}_{T,t}$,
which is injective. Note also that $\mathcal{O}_{S,s}\rightarrow\mathcal{O}_{T,t}$
is flat and local, hence faithfully flat, thus universally injective.
Write $x\in\mathcal{M}_{S,s}$ as $x=a/b$ with $a\in\mathcal{O}_{S,s}$
and $b\in\mathcal{O}_{S,s}^{reg}$. If $x$ maps to $0$ in $\mathcal{M}_{T,t}$,
then so does $a$ in $\mathcal{O}_{T,t}$, so $a=0$ in $\mathcal{O}_{S,s}$
and $x=0$ in $\mathcal{M}_{S,s}$. Thus $\mathcal{M}_{S,s}\hookrightarrow\mathcal{M}_{T,t}$
is injective, and so is $\mathcal{M}_{S,s}\hookrightarrow\mathcal{O}_{T,t}\otimes_{\mathcal{O}_{S,s}}\mathcal{M}_{S,s}$.
If $x$ maps to $\mathcal{O}_{T,t}\subset\mathcal{M}_{T,t}$, then
$\mathcal{O}_{T,t}a\subset\mathcal{O}_{T,t}b$ in $\mathcal{O}_{T,t}$,
whence $\mathcal{O}_{S,s}a\subset\mathcal{O}_{S,s}b$ in $\mathcal{O}_{S,s}$
since $I\mathcal{O}_{T,t}\cap\mathcal{O}_{S,s}=I$ for any ideal $I$
of $\mathcal{O}_{S,s}$. Thus $\mathcal{O}_{S,s}=\mathcal{M}_{S,s}\cap\mathcal{O}_{T,t}$,
say in $\mathcal{M}_{T,t}$. Intersecting this with $\mathcal{M}_{S,s}^{\times}$
gives $\mathcal{O}_{S,s}^{reg}=\mathcal{M}_{S,s}^{\times}\cap\mathcal{O}_{T,t}$,
while taking invertible elements gives $\mathcal{O}_{S,s}^{\times}=\mathcal{M}_{S,s}^{\times}\cap\mathcal{O}_{T,t}^{\times}$.
Since $\mathcal{O}_{S,s}^{reg}\subset\mathcal{O}_{T,t}^{reg}$ and
$\mathcal{O}_{S,s}^{reg}=\mathcal{M}_{S,s}^{\times}\cap\mathcal{O}_{T,t}$,
also $\mathcal{O}_{S,s}^{reg}=\mathcal{M}_{S,s}^{\times}\cap\mathcal{O}_{T,t}^{reg}$. 
\end{proof}

\subsubsection{Convex Morphisms}

In the previous diagram, the second line is some sort of convex envelope
of the first line in the third line. Here is an actual local statement: 
\begin{lem}
\label{lem:ConvexEnveloppe}For $t\in T$ above $s\in S$, the middle
group in 
\[
\mathcal{D}_{S,s}\subset(\mathcal{O}_{T,t}\otimes_{\mathcal{O}_{S,s}}\mathcal{M}_{S,s})^{\times}/\mathcal{O}_{T,t}^{\times}\subset\mathcal{D}_{T,t}
\]
is the convex envelope of the subgroup $\mathcal{D}_{S,s}$ in the
partially ordered group $\mathcal{D}_{T,t}$. 
\end{lem}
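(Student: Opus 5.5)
Write $A=\mathcal{O}_{S,s}$, $B=\mathcal{O}_{T,t}$ (flat local, hence faithfully flat over $A$), $M=\mathcal{M}_{S,s}$, and $N=B\otimes_{A}M$, viewed inside $\mathcal{M}_{T,t}$ via the previous lemma. Let $C$ denote the convex envelope of $\mathcal{D}_{S,s}$ in $\mathcal{D}_{T,t}$. Since $\mathcal{D}_{S,s}$ is a subgroup, $C$ is the set of $\delta\in\mathcal{D}_{T,t}$ with $-d\le\delta\le d$ for some $d\in\mathcal{D}_{S,s}^{+}$. One can check this set is a convex subgroup, and any convex subgroup containing $\mathcal{D}_{S,s}$ contains it. We must show $N^{\times}/B^{\times}=C$.

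\emph{Inclusion $N^{\times}/B^{\times}\subset C$.} Take $y\in N^{\times}$. Its inverse also lies in $N$, and every element of $N$ has the form $z/b$ with $z\in B$ and $b\in A^{reg}$ (common denominator). So write $y=z/b$ and $y^{-1}=z'/b'$ with $b,b'\in A^{reg}$. Put $d=\mathrm{div}(bb')\in\mathcal{D}_{S,s}^{+}$. Then $y\cdot bb'=zb'\in B$, so $\mathrm{div}(y)+d\ge0$. Similarly $y^{-1}bb'\in B$ gives $d-\mathrm{div}(y)\ge0$. Hence $-d\le\mathrm{div}(y)\le d$, so $\mathrm{div}(y)\in C$.

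\emph{Inclusion $C\subset N^{\times}/B^{\times}$.} Take $\delta=\mathrm{div}(y)$ with $y\in\mathcal{M}_{T,t}^{\times}$, $d=\mathrm{div}(a)$ with $a\in A^{reg}$, and $-d\le\delta\le d$. Then $ay\in B^{reg}$ and $a y^{-1}\in B^{reg}$. Hence $y=(ay)/a\in N$ and $y^{-1}=(ay^{-1})/a\in N$, so $y\in N^{\times}$ and $\delta$ lies in $N^{\times}/B^{\times}$.

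The only point needing care is the description of $C$ as the set of elements bounded by $\pm d$ with $d\in\mathcal{D}_{S,s}^{+}$. For convexity, note that if $d_{1},d_{2}$ bound $\delta_{1},\delta_{2}$, then $d_{1}+d_{2}$ bounds $\delta_{1}\pm\delta_{2}$, so the set is a subgroup. It is also convex: if $0\le\epsilon\le\delta$ with $-d\le\delta\le d$, then $-d\le0\le\epsilon\le d$. This set contains $\mathcal{D}_{S,s}$, since any $e=\mathrm{div}(a/b)$ with $a,b\in A^{reg}$ satisfies $-\mathrm{div}(ab)\le e\le\mathrm{div}(ab)$. Conversely, a convex subgroup $H\supset\mathcal{D}_{S,s}$ contains $d$. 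If $-d\le\delta\le d$, then $0\le\delta+d\le2d$, so $\delta+d\in H$ and therefore $\delta\in H$. No step appears to be a real obstacle; one only needs the embeddings of the previous lemma, so that all computations happen inside $\mathcal{M}_{T,t}$.
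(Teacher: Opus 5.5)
Your proof is correct and follows essentially the same route as the paper: you verify both inclusions directly inside $\mathcal{M}_{T,t}$, writing $y$ and $y^{-1}$ with denominators in $\mathcal{O}_{S,s}^{reg}$ and using $\mathcal{O}_{T,t}\cap\mathcal{M}_{T,t}^{\times}=\mathcal{O}_{T,t}^{reg}$ to read off positivity in $\mathcal{D}_{T,t}$. The differences are cosmetic. You normalize the bounds to $\pm d$ with $d\in\mathcal{D}_{S,s}^{+}$, and you correctly justify this by noting that every element of $\mathcal{D}_{S,s}$ is a difference of positive ones; the paper instead uses arbitrary bounds $\alpha\le\beta\le\gamma$ in $\mathcal{D}_{S,s}$. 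For the reverse inclusion you show $y,y^{-1}\in N$ directly, rather than arguing as the paper does that $xy=c/a$ is a unit of $N$.
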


\begin{proof}
Let $\mathcal{D}$ be the middle group. Lift $\beta\in\mathcal{D}$
to $b\in(\mathcal{O}_{T,t}\otimes_{\mathcal{O}_{S,s}}\mathcal{M}_{S,s})^{\times}$
and write $b=x/d$, $b^{-1}=y/c$ with $x,y\in\mathcal{O}_{T,t}$
and $c,d\in\mathcal{O}_{S,s}^{reg}$; then $x=bd$ and $y=cb^{-1}$
belong to $\mathcal{O}_{T,t}\cap\mathcal{M}_{T,t}^{\times}=\mathcal{O}_{T,t}^{reg}$,
$a=1/d$ and $c$ belong to $\mathcal{M}_{S,s}^{\times}$, with $ax=b$
and $by=c$ in $\mathcal{M}_{T,t}$; thus $\alpha\leq\beta\leq\gamma$
in $\mathcal{D}_{T,t}$, where $\alpha,\gamma\in\mathcal{D}_{S,s}$
are the images of $a,c\in\mathcal{M}_{S,s}^{\times}$. Suppose conversely
that $\alpha\leq\beta\leq\gamma$ in $\mathcal{D}_{T,t}$ with $\alpha,\gamma\in\mathcal{D}_{S,s}$,
and lift them to $a,c\in\mathcal{M}_{S,s}^{\times}$ and $b\in\mathcal{M}_{T,t}^{\times}$,
so that $ax=b$ and $by=c$ in $\mathcal{M}_{T,t}$ for some $x,y\in\mathcal{O}_{T,t}^{reg}$;
then $xy=c/a\in\mathcal{M}_{S,s}^{\times}$ is invertible in $\mathcal{O}_{T,t}\otimes\mathcal{M}_{S,s}$,
thus so are $x,y\in\mathcal{O}_{T,t}$, along with $b=ax$, i.e.~$\beta$
belongs to the middle group $\mathcal{D}$.
\end{proof}
\begin{prop}
\label{prop:ConvexNP}The following conditions on the flat $f:T\rightarrow S$
are equivalent:
\begin{enumerate}
\item The restriction $\vartheta$ of $\theta$ to $T_{Zar}$ is an isomorphism.
\item $f^{-1}\mathcal{D}_{S}\hookrightarrow f^{\ast}\mathcal{M}_{S}^{\times}/\mathcal{O}_{T}^{\times}$
is an isomorphism.
\item For $t\in T$ above $s\in S$, the image of $\mathcal{D}_{S,s}\hookrightarrow\mathcal{D}_{T,t}$
is convex.
\end{enumerate}
\end{prop}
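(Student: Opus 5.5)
The plan is to work with the diagram of the flat case, in which the first line is the restricted connected-étale sequence (\ref{eq:RestOfConnecEtalSeq2TZar}), with middle term $\mathbb{G}_{m,S}^{lft}\vert_{T_{Zar}}=\mathcal{O}_{T}^{\times}\coprod_{f^{-1}\mathcal{O}_{S}^{\times}}f^{-1}\mathcal{M}_{S}^{\times}$. The second line is $1\rightarrow\mathcal{O}_{T}^{\times}\rightarrow f^{\ast}\mathcal{M}_{S}^{\times}\rightarrow f^{\ast}\mathcal{M}_{S}^{\times}/\mathcal{O}_{T}^{\times}\rightarrow1$. Both lines are exact; the second is exact because $\mathcal{O}_{T}^{\times}\rightarrow f^{\ast}\mathcal{M}_{S}^{\times}$ is injective when $f$ is flat, by the lemma giving the embeddings. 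The vertical maps are the identity on $\mathcal{O}_{T}^{\times}$, then $\vartheta$, then the injection $f^{-1}\mathcal{D}_{S}\hookrightarrow f^{\ast}\mathcal{M}_{S}^{\times}/\mathcal{O}_{T}^{\times}$.

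For $(1)\Leftrightarrow(2)$ I will apply the five lemma, or equivalently the snake lemma, to this morphism of short exact sequences. The left map is the identity, so $\ker\vartheta\simeq\ker(\text{right map})$ and $\mathrm{coker}\,\vartheta\simeq\mathrm{coker}(\text{right map})$. Hence $\vartheta$ is an isomorphism if and only if the right vertical map is. Note that injectivity of $\vartheta$ holds automatically here, since the right map is injective.

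For $(2)\Leftrightarrow(3)$, a morphism of Zariski sheaves is an isomorphism if and only if it is one on all stalks. At $t\in T$ above $s\in S$, the stalk of the right map is
\[
\mathcal{D}_{S,s}\hookrightarrow(\mathcal{O}_{T,t}\otimes_{\mathcal{O}_{S,s}}\mathcal{M}_{S,s})^{\times}/\mathcal{O}_{T,t}^{\times}.
\]
This uses that $f^{\ast}$ and taking units commute with stalks, which holds since units of a stalk of a sheaf of rings are the germs of units. By Lemma~\ref{lem:ConvexEnveloppe}, the target is the convex envelope of $\mathcal{D}_{S,s}$ inside $\mathcal{D}_{T,t}$. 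So the stalk map is surjective if and only if $\mathcal{D}_{S,s}$ equals its convex envelope, i.e.\ if and only if the image of $\mathcal{D}_{S,s}\hookrightarrow\mathcal{D}_{T,t}$ is convex. Taking this over all $t$ gives $(2)\Leftrightarrow(3)$.

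I do not expect a serious obstacle. The most delicate point is bookkeeping: checking that the stalk of $f^{\ast}\mathcal{M}_{S}^{\times}/\mathcal{O}_{T}^{\times}$ at $t$ is indeed the middle group of Lemma~\ref{lem:ConvexEnveloppe}, compatibly with the embeddings into $\mathcal{D}_{T,t}$. One must also note that the convex envelope is taken with respect to the order on $\mathcal{D}_{T,t}$, which by the preceding lemma restricts to the given order on $\mathcal{D}_{S,s}$.
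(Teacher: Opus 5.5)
Your proposal is correct and follows the paper's route. The paper's proof just says the result ``follows from the previous discussion'': the flat-case diagram, where the identity on $\mathcal{O}_{T}^{\times}$ makes $\vartheta$ an isomorphism exactly when $f^{-1}\mathcal{D}_{S}\hookrightarrow f^{\ast}\mathcal{M}_{S}^{\times}/\mathcal{O}_{T}^{\times}$ is one, together with Lemma~\ref{lem:ConvexEnveloppe}, which identifies the stalks of the latter sheaf with the convex envelope of $\mathcal{D}_{S,s}$ in $\mathcal{D}_{T,t}$; you have only spelled out the snake-lemma and stalk bookkeeping.
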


\begin{proof}
This follows from the previous discussion. 
\end{proof}
\begin{defn}
We say that a flat $f$ is \emph{convex} if it satisfies the above
conditions. 
\end{defn}

\subsection{Functoriality}

Still assuming that $f:T\rightarrow S$ is flat, we may promote the
commutative diagram of the previous subsection to a commutative diagram
\[
\xyC{1pc}\xymatrix{1\ar[r] & \mathbb{G}_{m,S}\times_{S}T\ar[r]\ar@{=}[d] & \mathbb{G}_{m,S}^{lft}\times_{S}T\ar[r]\ar[d] & \Pi_{S}\times_{S}T\ar[r]\ar[d] & 1\\
1\ar[r] & \mathbb{G}_{m,T}\ar[r]\ar@{=}[d] & \mathbb{G}_{m,f}^{lft}\ar[r]\ar[d] & \Pi_{f}\ar[r]\ar[d] & 1\\
1\ar[r] & \mathbb{G}_{m,T}\ar[r] & \mathbb{G}_{m,T}^{lft}\ar[r] & \Pi_{T}\ar[r] & 1
}
\]
of smooth commutative group schemes over $T$ as follows. For the
right column, we take the étale group schemes associated to the Zariski
sheaves 
\[
\xymatrix{f^{-1}(\mathcal{D}_{S})=f^{-1}(\mathcal{M}_{S}^{\times}/\mathcal{O}_{S}^{\times})\ar[r] & f^{\ast}\mathcal{M}_{S}^{\times}/\mathcal{O}_{T}^{\times}\ar[r] & \mathcal{M}_{T}^{\times}/\mathcal{O}_{T}^{\times}=\mathcal{D}_{T}}
.
\]
For the second line, we take the pullback of the third by $\Pi_{f}\rightarrow\Pi_{T}$.
For the morphism from the first to the second line, we thus just need
to define a morphism 
\[
f^{\star}:\mathbb{G}_{m,S}^{lft}\times_{S}T\rightarrow\mathbb{G}_{m,T}^{lft}
\]
extending $\mathbb{G}_{m,S}\times_{S}T\simeq\mathbb{G}_{m,T}$ and
inducing the already constructed $T$-morphism $f^{\star}:\Pi_{S}\times_{S}T\rightarrow\Pi_{T}$.
Let us write $\pi_{S}:\Pi_{S}\rightarrow S$ and $\pi_{T}:\Pi_{T}\rightarrow T$
for the structural morphisms, $\mathcal{L}_{S}\subset\pi_{S}^{\ast}\mathcal{M}_{S}$
and $\mathcal{L}_{T}\subset\pi_{T}^{\ast}\mathcal{M}_{T}$ for the
canonical primitive line bundles on $\Pi_{S}$ and $\Pi_{T}$. They
respectively pull back to the line bundles 
\[
p_{1}^{\ast}\mathcal{L}_{S}\subset p_{1}^{\ast}\pi_{S}^{\ast}\mathcal{M}_{S}=p_{2}^{\ast}f^{\ast}\mathcal{M}_{S}\quad\text{and}\quad(f^{\star})^{\ast}\mathcal{L}_{T}\subset(f^{\star})^{\ast}\circ\pi_{T}^{\ast}\mathcal{M}_{T}=p_{2}^{\ast}\mathcal{M}_{T}
\]
on $\Pi_{S}\times_{S}T$, where $p_{1}:\Pi_{S}\times_{S}T\rightarrow\Pi_{S}$
and $p_{2}:\Pi_{S}\times_{S}T\rightarrow T$ are the projections.
We have seen that $f^{\ast}\mathcal{O}_{S}=\mathcal{O}_{T}$ extends
to an embedding $f^{\ast}\mathcal{M}_{S}\hookrightarrow\mathcal{M}_{T}$,
which pulls back to an embedding $p_{2}^{\ast}f^{\ast}\mathcal{M}_{S}\hookrightarrow p_{2}^{\ast}\mathcal{M}_{T}$.
We claim that the latter identifies $p_{1}^{\ast}\mathcal{L}_{S}$
with $(f^{\star})^{\ast}\mathcal{L}_{T}$. This can be checked locally,
so let $x$ be a point of $\Pi_{S}\times_{S}T$, i.e.~$x=(\overline{d},t)$
mapping to $t\in T$ above $s\in S$, with $\overline{d}\in\mathcal{D}_{S,s}$
represented by $d\in\mathcal{M}_{S,s}^{\times}$, so that $f^{\star}$
maps $x$ to the point corresponding to $f^{\ast}\overline{d}\in\mathcal{D}_{T,t}$
represented by $f^{\ast}d\in\mathcal{M}_{T,t}^{\times}$ in the fiber
of $\pi_{T}:\Pi_{T}\rightarrow T$ above $t$. The stalk of $p_{2}^{\ast}f^{\ast}\mathcal{M}_{S}\hookrightarrow p_{2}^{\ast}\mathcal{M}_{T}$
at $x$ is then the morphism $\mathcal{O}_{T,t}\otimes_{\mathcal{O}_{S,s}}\mathcal{M}_{S,s}\hookrightarrow\mathcal{M}_{T,t}$,
which indeed maps the local generator $1\otimes d$ of $p_{1}^{\ast}\mathcal{L}_{S}$
to the local generator $f^{\ast}d$ of $(f^{\star})^{\ast}\mathcal{L}_{T}$.
We may now define our morphism $f^{\star}$ as follows: for a $T$-scheme
$U$ and $(a,\alpha)\in\mathbb{G}_{m,S}^{lft}(U)$, we set 
\[
f^{\star}(a,\alpha)=(b,\beta)
\]
 where $b:U\rightarrow\Pi_{T}$ is the precomposition of $f^{\star}:\Pi_{S}\times_{S}T\rightarrow\Pi_{T}$
with the $T$-lift $a':U\rightarrow\Pi_{S}\times_{S}T$ of the $S$-morphism
$a:U\rightarrow\Pi_{S}$, and $\beta\in\Gamma(U,b^{\ast}\mathcal{L}_{T})$
is the generating section corresponding to $\alpha\in\Gamma(U,a^{\ast}\mathcal{L}_{S})$
under the isomorphism 
\[
a^{\ast}\mathcal{L}_{S}=a^{\prime*}\circ p_{1}^{\ast}\mathcal{L}_{S}\simeq a^{\prime*}(f^{\star})^{\ast}\mathcal{L}_{T}=b^{\ast}\mathcal{L}_{T}.
\]
As before, all vertical maps in our diagram are monomorphisms. Since
the last column is made of étale schemes, all of our vertical maps
are also étale, and thus open immersions. The first two lines are
equal if and only if $f$ is convex. Moreover: 
\begin{prop}
The following conditions on the flat $f:T\rightarrow S$ are equivalent: 
\begin{enumerate}
\item $f^{\star}:\mathbb{G}_{m,S}^{lft}\times_{S}T\hookrightarrow\mathbb{G}_{m,T}^{lft}$
is an isomorphism.
\item $\Pi_{S}\times_{S}T\hookrightarrow\Pi_{T}$ is an isomorphism.
\item For every $t\in T$ above $s\in S$, $\mathcal{D}_{S,s}\hookrightarrow\mathcal{D}_{T,t}$
is an isomorphism. 
\end{enumerate}
\end{prop}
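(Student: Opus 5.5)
The plan is to prove $(3)\Leftrightarrow(2)$ using the espace étalé description, and then $(1)\Leftrightarrow(2)$ using the diagram of extensions.

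\emph{$(2)\Leftrightarrow(3)$.} The morphism $\Pi_{S}\times_{S}T\rightarrow\Pi_{T}$ is the morphism of espaces étalés associated with the composite
\[
f^{-1}\mathcal{D}_{S}\rightarrow f^{\ast}\mathcal{M}_{S}^{\times}/\mathcal{O}_{T}^{\times}\rightarrow\mathcal{D}_{T}
\]
of Zariski sheaves on $T$. Here we use the earlier observation that the espace étalé of $f^{-1}\mathcal{D}_{S}$ is $\Pi_{S}\times_{S}T$. Passing to espaces étalés is an equivalence between sheaves on $T_{Zar}$ and local isomorphisms over $T$. Hence this morphism is an isomorphism of $T$-schemes if and only if the map of sheaves is an isomorphism, and that can be checked on stalks. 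The stalk at $t\in T$ above $s\in S$ is the composite $\mathcal{D}_{S,s}\rightarrow\mathcal{D}_{T,t}$ computed before. Concretely, the underlying set of $\Pi_{S}\times_{S}T$ is $\coprod_{t}\mathcal{D}_{S,f(t)}$, that of $\Pi_{T}$ is $\coprod_{t}\mathcal{D}_{T,t}$, and the map is already known to be an open immersion. An open immersion is an isomorphism iff it is surjective, which is fiberwise surjectivity, which is (3).

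\emph{$(1)\Leftrightarrow(2)$.} Use the commutative diagram of exact sequences whose first row is $\mathbb{G}_{m,S}\times_{S}T\rightarrow\mathbb{G}_{m,S}^{lft}\times_{S}T\rightarrow\Pi_{S}\times_{S}T$ and whose bottom row is the connected-étale sequence of $\mathbb{G}_{m,T}^{lft}$; the left vertical map is the identity of $\mathbb{G}_{m,T}$. If (2) holds, the five lemma applied to these Zariski sheaves of groups on $\Sch_{T}$ shows that $f^{\star}$ is an isomorphism of sheaves, hence of representing schemes. Conversely, suppose $f^{\star}$ is an isomorphism. Both horizontal projections onto $\Pi$ are surjective as Zariski sheaves, and by commutativity the composite $\mathbb{G}_{m,S}^{lft}\times_{S}T\rightarrow\Pi_{T}$ is surjective. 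So $\Pi_{S}\times_{S}T\rightarrow\Pi_{T}$ is an epimorphism of Zariski sheaves, and it is already a monomorphism, hence it is an isomorphism. Alternatively, on underlying spaces, an isomorphism $f^{\star}$ is in particular surjective, and $\mathbb{G}_{m,T}^{lft}\rightarrow\Pi_{T}$ is surjective, which forces the open immersion on $\Pi$ to be surjective.

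The main point needing care is the step "open immersion plus surjective on points implies isomorphism", together with the identification of points of $\Pi_{S}\times_{S}T$ over $t$ with $\mathcal{D}_{S,s}$. The latter uses that $\pi_{S}$ is a local isomorphism, so its fibre product with $T$ is computed by $f^{-1}$ of the espace étalé. Beyond that, everything reduces to the preceding lemma and the injectivity $\mathcal{D}_{S,s}\hookrightarrow\mathcal{D}_{T,t}$ already established.
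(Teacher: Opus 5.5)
Your proof is correct and essentially matches the paper, which gives no separate proof but lets the statement follow from the preceding discussion. That discussion provides the same ingredients you use: the map $\Pi_{S}\times_{S}T\rightarrow\Pi_{T}$ is the espace étalé of $f^{-1}\mathcal{D}_{S}\rightarrow\mathcal{D}_{T}$, with injective stalks $\mathcal{D}_{S,s}\hookrightarrow\mathcal{D}_{T,t}$, hence an open immersion, and the diagram of extensions, with identity on $\mathbb{G}_{m,T}$, shows that $f^{\star}$ is an isomorphism exactly when the map on $\Pi$ is.
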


\begin{example}
Plainly, $\mathbb{G}_{m,S}^{lft}\times_{S}U\simeq\mathbb{G}_{m,U}^{lft}$
for any open $U$ of $S$. For $s\in S$ and with $S(s)=\Spec(\mathcal{O}_{S,s})$,
we find that $\mathbb{G}_{m,S}^{lft}\times_{S}S(s)\simeq\mathbb{G}_{m,S(s)}^{lft}$
if and only if 
\[
\forall t\in S(s):\qquad\mathcal{O}_{S,t}^{reg}=\mathcal{O}_{S(s),t}^{reg}\quad\text{in}\quad\mathcal{O}_{S,t}=\mathcal{O}_{S(s),t}.
\]
This holds true if $\WeakAss(S)$ is locally finite at $s$ by proposition~\ref{prop:NoPathologies}. 
\end{example}

\subsection{Convexity, II}

We have defined a flat morphism $f:T\rightarrow S$ to be convex if
for every $t\in T$ mapping to $s\in S$, the embedding $\mathcal{D}_{S,s}\hookrightarrow\mathcal{D}_{T,t}$
realizes $\mathcal{D}_{S,s}$ has a convex subgroup of $\mathcal{D}_{T,t}$.
We will investigate a slightly stronger, but purely local condition.
Namely, consider the commutative diagram 
\[
\xymatrix{\mathcal{D}_{S,s}\ar[r]\ar[d] & \mathcal{D}_{T,t}\ar[d]\\
\mathcal{D}(\mathcal{O}_{S,s})\ar[r] & \mathcal{D}(\mathcal{O}_{T,t})
}
\]
Since both vertical maps are convex embeddings, we find that
\[
\mathcal{D}(\mathcal{O}_{S,s})\text{ convex in }\mathcal{D}(\mathcal{O}_{T,t})\Rightarrow\mathcal{D}_{S,s}\text{ convex in }\mathcal{D}(\mathcal{O}_{T,t})\Rightarrow\mathcal{D}_{S,s}\text{ convex in }\mathcal{D}_{T,t}.
\]
This suggests the following definition. For a commutative ring $A$,
we denote by $\mathcal{D}(A)=\mathcal{Q}(A)^{\times}/A^{\times}$
its \emph{group of divisibility}, a partially ordered commutative
group (pocg) with positive cone $\mathcal{D}^{+}(A)=\Reg(A)/A^{\times}$.
A flat morphism $\varphi:A\rightarrow B$ induces a pocg morphism
$\mathcal{D}(\varphi):\mathcal{D}(A)\rightarrow\mathcal{D}(B)$. If
$\varphi$ is faithfully flat, $\mathcal{D}(\varphi)$ is injective
and $\mathcal{D}^{+}(A)=\mathcal{D}(\varphi)^{-1}(\mathcal{D}^{+}(B))$,
i.e.~$\mathcal{D}(\varphi)$ is a strict embedding of pocgs. 
\begin{defn}
A faithfully flat ring morphism $\varphi:A\rightarrow B$ is \emph{naively
convex} if and only if it satisfies the following equivalent conditions: 
\begin{enumerate}
\item It realizes $\mathcal{D}(A)$ as a convex subgroup of $\mathcal{D}(B)$.
\item Any unit of $B\otimes\mathcal{Q}(A)$ is the product of a unit of
$B$ and a unit of $\mathcal{Q}(A)$.
\item Any divisor in $B$ of a regular element of $A$ is the product of
a unit of $B$ and a regular element of $A$.
\end{enumerate}
The proof of the equivalence of the three conditions is easy and left
to the reader. Note that our faithfully flat $\varphi:A\rightarrow B$
is universally injective. Thus for any ideal $I$ of $A$, we have
$A/I\hookrightarrow B/IB$, i.e.~$I=A\cap IB$ in $B$. In particular
for $a_{1},a_{2}\in A$, if $a_{1}\mid a_{2}$ in $B$, then also
$a_{1}\mid a_{2}$ in $A$. It follows that if $\varphi$ is naively
convex, it preserves irreducible regular elements. Here is a partial
converse:
\end{defn}

\begin{lem}
Suppose that $A$ and $B$ are unique factorization domains. Then
$\varphi$ is naively convex if and only if it maps prime elements
of $A$ to prime elements of $B$.
\end{lem}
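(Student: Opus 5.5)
The plan is to work with condition (3) of the definition of naive convexity, using that in a UFD the positive cone of the divisibility group is free on the classes of primes. For a domain $A$, $\mathcal{Q}(A)$ is the fraction field and $\Reg(A)=A\setminus\{0\}$. If $A$ is a UFD, then $\mathcal{D}(A)$ is the free abelian group on the set $P_A$ of associate classes of prime elements, ordered componentwise. The same holds for $B$.

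For the forward direction, assume $\varphi$ is naively convex and let $p$ be a prime of $A$. Since $\varphi$ is faithfully flat, $pB\neq B$ (because $pB\cap A=pA$), so $\varphi(p)$ is a nonzero nonunit. By the remark preceding the lemma, naive convexity preserves irreducible regular elements: a factorization $\varphi(p)=xy$ in $B$ makes $x,y$ divisors of $\varphi(p)$. By (3), each is a unit of $B$ times a regular element of $A$, say $x=ua$ and $y=vb$. Then $ab$ and $p$ are associates in $B$, hence each divides the other in $B$ and so also in $A$. Therefore $p=ab\cdot w$ with $w\in A^{\times}$, and irreducibility of $p$ forces $a$ or $b$ to be a unit in $A$, so $x$ or $y$ is a unit in $B$. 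Thus $\varphi(p)$ is irreducible in the UFD $B$, hence prime.

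For the converse, assume $\varphi$ sends primes to primes. Let $a\in A\setminus\{0\}$ and let $x\in B$ divide $\varphi(a)$. Write $a=u\,p_1^{e_1}\cdots p_r^{e_r}$ with the $p_i$ pairwise non-associate primes and $u\in A^{\times}$. Then $\varphi(a)=\varphi(u)\prod\varphi(p_i)^{e_i}$ is a prime factorization in $B$. The one point needing care is that the $\varphi(p_i)$ stay pairwise non-associate in $B$. If $\varphi(p_i)\mid\varphi(p_j)$ in $B$, then $p_i\mid p_j$ in $A$ by universal injectivity, which contradicts $i\neq j$. By unique factorization in $B$, every divisor $x$ of $\varphi(a)$ has the form $v\prod\varphi(p_i)^{f_i}$ with $v\in B^{\times}$ and $0\le f_i\le e_i$. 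This is a unit of $B$ times the image of the regular element $\prod p_i^{f_i}$ of $A$, which is condition (3).

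I expect the main obstacle to be bookkeeping rather than substance. In the forward direction one must rule out that $\varphi(p)$ is a unit, which is done via $pB\cap A=pA$. In the converse, one must ensure that distinct prime classes do not merge in $B$; universal injectivity handles this. Both cases are settled by the identity $I=A\cap IB$ recorded just before the lemma.
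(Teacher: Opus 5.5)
Your proof is correct and is essentially the paper's argument. The paper reads the equivalence off characterization (1), viewing $\mathcal{D}(A)$ and $\mathcal{D}(B)$ as free abelian groups on associate classes of primes with $\mathcal{D}(\varphi)[p]=\sum n_q[q]$; you unwind the same unique-factorization bookkeeping through characterization (3), with the identity $I=A\cap IB$ doing the work that injectivity and strictness of $\mathcal{D}(\varphi)$ do in the paper. Your check that the $\varphi(p_i)$ stay pairwise non-associate is correct but not actually needed for the converse: any divisor of $\varphi(a)$ can still be written as $v\prod\varphi(p_i)^{f_i}$ with $0\le f_i\le e_i$.
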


\begin{proof}
This is particularly clear on the first characterization of naively
convex maps: $\mathcal{D}(A)=\mathcal{Q}(A)^{\times}/A^{\times}$
is now the free abelian group on the set of equivalence classes $[p]=A^{\times}p$
of primes $p$ of $A$, likewise for $\mathcal{D}(B)=\mathcal{Q}(B)^{\times}/B^{\times}$,
and $\mathcal{D}(\varphi)$ maps $[p]$ to $\sum n_{q}[q]$ where
$p=u\prod q^{n_{q}}$ is the prime factorization of $p$ in $B$. 
\end{proof}
Suppose that $\varphi$ is naively convex. Then for any divisor $b$
in $B$ of a regular element of $A$, $(1)$ there is a smallest ideal
$I$ of $A$ such that $b\in IB$, and $(2)$ there is a smallest
principal ideal $J$ of $A$ such that $I\subset J$. Indeed we know
that $b=ua$ with $u\in B^{\times}$ and $a\in\Reg(A)$, so we may
take $I=J=Aa$. These properties are respectively implied by $(1')$
$\varphi$ is \emph{Ohm-Rush}, i.e.~for \emph{any }element $b$ of
$B$, there is a smallest ideal $I$ of $A$ such that $b\in IB$
-- in which case $I$ is necessarily finitely generated, and $(2')$
$A$ is a GCD-domain -- in which case for \emph{any }finitely generated
ideal $I$ of $A$, there is a smallest principal ideal $J$ of $A$
such that $I\subset J$. In fact:
\begin{lem}
If $A$ is a GCD-domain, $\varphi$ is Ohm-Rush, and $\Spec(\varphi)$
has integral fibers over $\{p:\text{p is minimal over some }a\in p,a\neq0\}$,
then $\varphi$ is naively convex.
\end{lem}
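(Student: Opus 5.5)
We argue on characterization $(3)$ of naive convexity. Let $a\in\Reg(A)=A\setminus\{0\}$ and let $b,c\in B$ with $bc=a$ in $B$. We must show that $b\in B^{\times}\cdot A$.

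\emph{Step 1: reduce to primitive factors.} Since $\varphi$ is Ohm-Rush, $b$ and $c$ have content ideals $I=c(b)$ and $I'=c(c)$ in $A$. These are finitely generated, so the GCD property gives smallest principal ideals $Ad\supset I$ and $Ae\supset I'$. Then $b\in IB\subset dB$ and $c\in eB$, so $b=db'$ and $c=ec'$. We want $b'$ and $c'$ to be \emph{primitive}, meaning their content is contained in no proper principal ideal.
\begin{itemize}
\item For this we need $c(db')=d\,c(b')$. This should follow from flatness, using $(I:d)B=(IB:d)$ for the regular element $d$, together with the standard Ohm-Rush identity $c(xb')=x\,c(b')$ for $x\in A$ in the flat case.
\item Next, $de\mid a$ in $B$, hence in $A$ by universal injectivity. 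So $b'c'=a''$ with $a''=a/(de)\in A\setminus\{0\}$, using that $d,e$ are regular in $B$ by flatness.
\end{itemize}
It then suffices to show that a primitive divisor $b'$ of a nonzero $a''\in A$ whose cofactor $c'$ is also primitive is a unit of $B$.

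\emph{Step 2: locate a bad prime.} Suppose $b'\notin B^{\times}$. We would like a prime $Q$ of $B$ containing $b'$ whose contraction $p=Q\cap A$ is minimal over $a''A$; the fiber hypothesis then applies at $p$.
\begin{itemize}
\item Take $Q$ minimal among primes of $B$ containing $b'B+a''B=b'B$.
\item By going-down for the flat map $\varphi$, the contraction of a prime minimal over $a''B$ is minimal over $a''A$. I would try to arrange $Q$ to be minimal over $a''B$ itself, possibly by shrinking $Q$ inside $V(b')$ using that $b'c'=a''$.
\end{itemize}
Once $p$ is minimal over $a''$, the fiber ring $B\otimes_A k(p)$ is a domain. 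So $pB_p$ is prime in $B_p$, and it is the unique minimal prime of $B_p$ over $a''B_p$. Since $b'c'=a''\in pB_p$, one of $b'$ or $c'$ lies in $pB_p$. Hence there is $s\in A\setminus p$ with $sb'\in pB$ (or $sc'\in pB$). By the content identity, $s\,c(b')=c(sb')\subset p$, so $c(b')\subset p$.

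\emph{Step 3: contradiction with primitivity; this is the main obstacle.} It remains to turn $c(b')\subset p$, with $p$ minimal over a nonzero principal ideal, into $c(b')$ lying in a proper \emph{principal} ideal, contradicting Step 1.
\begin{itemize}
\item In a general GCD domain a finitely generated ideal inside $p$ need not sit in a proper principal ideal, so more input is needed here.
\item My first attempt is to localize at $p$. The ring $A_p$ is again a GCD domain, $pA_p$ is minimal over $a''$, and the content construction commutes with this flat localization. Then $c(b')A_p\subset pA_p$ should force, via a gcd computation with powers of $a''$ (which has radical $pA_p$), a common nonunit divisor of the generators of $c(b')$.
\item If that fails, I would instead induct on the factorization: run the argument symmetrically on $c'$, and use that $c(b')c(c')$ and $c(b'c')=a''A$ have the same radical. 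This is a Dedekind–Mertens type comparison, to be checked for flat Ohm-Rush maps. Primitivity of both $b'$ and $c'$ would then force $a''$ to be a unit, whence $b'\in B^{\times}$.
\end{itemize}
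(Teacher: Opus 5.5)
Your Steps 1--2 follow the same skeleton as the paper's proof: split off gcd's of the content ideals, pass to the cofactor $a''$, take a prime $p$ minimal over $a''$, and use integrality of the fiber at $p$. But the argument is not finished. Step 3 is left open, and it is the heart of the lemma. The missing ingredient is that in a GCD-domain every prime $p$ minimal over a nonzero principal ideal is a PF-prime, i.e.\ closed under taking gcd's; this is Sheldon's theorem, which the paper invokes. With it, Step 3 is immediate. If $x_1,\dots,x_n\in p$ generate $c(b')$, then $g=\gcd(x_1,\dots,x_n)\in p$, so $c(b')\subseteq gA\subsetneq A$, contradicting primitivity of $b'$ (and symmetrically for $c'$). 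Your fallback (b) does not supply this. It rests on a Dedekind--Mertens type radical identity $\sqrt{c(b')c(c')}=\sqrt{c(b'c')}$, which is not among the hypotheses and which you do not prove.

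Your first attempt (a) is the right idea, and it can be completed as follows. A gcd in $A$ of elements of $A$ remains a gcd in $A_p$. Since $p$ is minimal over $a''$, we have $pA_p=\sqrt{a''A_p}$, so $x_i^N\in a''A_p$ for all $i$ and $N\gg0$. Suppose $g\notin p$. Then $g^N=\gcd(x_1^N,\dots,x_n^N)$ is a unit of $A_p$, since in a GCD-domain gcd's commute with powers. But $a''$ divides $g^N$, so $a''\in A_p^{\times}$, contradicting $a''\in p$.

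Two simplifications are also worth making.
\begin{itemize}
\item In Step 2, the search for a prime $Q\ni b'$ minimal over $a''B$ is unnecessary. It is also unresolved as written, since a prime minimal over $b'B$ need not be minimal over $a''B$. If $b'\notin B^{\times}$ then $a''\notin A^{\times}$, so just take $p$ minimal over $a''A$; your argument already allows either factor to land in $p$.
\item $pB$ is itself prime. Indeed $B/pB$ is flat, hence torsion-free, over the domain $A/p$, so it embeds in the domain $B\otimes_A k(p)$. This gives $b'\in pB$ or $c'\in pB$ directly, and then $c(b')\subseteq p$ by minimality of the content.
\end{itemize}
Consequently the identity $c(sb')=s\,c(b')$ is not needed. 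The same goes for $c(db')=d\,c(b')$ in Step 1, where primitivity only uses the trivial inclusion $c(db')\subseteq d\,c(b')$.
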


\begin{proof}
Suppose that $a=b_{1}b_{2}$ in $B$ with $b_{i}\in B$ and $a\in\Reg(A)$.
Let $I_{i}\subset A$ be the (finitely generated) smallest ideal of
$A$ such that $b_{i}\in I_{i}B$ and let $J_{i}=a_{i}A$ be the smallest
principal ideal of $A$ such that $I_{i}\subset J_{i}$. Thus $b_{i}\in I_{i}B\subset J_{i}B=a_{i}B$,
whence $b_{i}=a_{i}b_{i}^{\prime}$ for some $b_{i}^{\prime}\in B$.
In particular $a_{1}a_{2}b_{1}^{\prime}b_{2}^{\prime}=a$ in $B$,
so $a_{1}a_{2}\mid a$ in $B$ and $A$, say $a=a_{1}a_{2}a'$ for
some $a'\in A$. Since $a$ is regular in $A$, so are $a_{1}$, $a_{2}$
and $a'$, in $A$ and $B$. Since $a_{1}a_{2}b_{1}^{\prime}b_{2}^{\prime}=a_{1}a_{2}a'$
in $B$, $b_{1}^{\prime}b_{2}^{\prime}=a'$ in $B$. Let $I_{i}^{\prime}\subset A$
be the (finitely generated) smallest ideal of $A$ such that $b_{i}^{\prime}\in I_{i}^{\prime}B$
and let $J_{i}^{\prime}$ be the smallest principal ideal of $A$
such that $I_{i}^{\prime}\subset J_{i}^{\prime}$. Since $b_{i}=a_{i}b_{i}^{\prime}\subset a_{i}I_{i}^{\prime}B$,
$I_{i}\subset a_{i}I_{i}^{\prime}\subset a_{i}J_{i}^{\prime}$ by
definition of $I_{i}$, so $a_{i}A=J_{i}\subset a_{i}J_{i}^{\prime}\subset a_{i}A$
by definition of $J_{i}$, whence $J_{i}^{\prime}=A$ since $a_{i}\in\Reg(A)$.
If $a'$ is not invertible in $A$, there is a minimal prime ideal
$p$ of $A$ containing $a'$; by~\cite[Theorem 3.1]{Sh74}, any
such $p$ is a PF-prime, i.e.~stable under taking GCD's; by assumption,
$pB$ is a prime ideal of $B$; since $b_{1}^{\prime}b_{2}^{\prime}=a^{\prime}\in pB$,
$b_{i}^{\prime}\in pB$ for some $i\in\{1,2\}$, and then $I_{i}^{\prime}\subset p$
by definition of $I_{i}^{\prime}$, so $J_{i}^{\prime}\subset p$
by definition of $J_{i}^{\prime}$; this contradicts $J_{i}^{\prime}=A$,
so $a'$ is invertible in $A$. Since $b_{1}^{\prime}b_{2}^{\prime}=a'$
in $B$, also $b_{i}^{\prime}\in B^{\times}$. Thus $b_{i}=a_{i}b_{i}^{\prime}\in\Reg(A)\cdot B^{\times}$,
which proves the lemma. 
\end{proof}

\subsection{Smooth morphisms over Prüfer schemes are convex}

Returning to a flat morphism of schemes $f:T\rightarrow S$, we want
to apply the previous lemma to the (faithfully) flat local morphisms
$\varphi:A\rightarrow B$, with $A=\mathcal{O}_{S,s}$ and $B=\mathcal{O}_{T,t}$
for $t\in T$ above $s\in S$. So we first require that $S$ is a
\emph{GCD-scheme}, i.e.~all local rings of $S$ are GCD-domains.
For instance, $S$ could be \emph{regular}, or \emph{coherent regular},
or \emph{Prüfer}, which respectively imply that its local rings are
noetherian regular (hence UFD's), coherent regular (hence GCD-domains
by \cite[Theorem 5.21]{Va76}), or valuations domains. We next want
$\varphi$ to be Ohm-Rush, which holds true if $f$ is locally of
finite type and $A$ is henselian \cite[\href{https://stacks.math.columbia.edu/tag/05U7}{Tag 05U7}]{SP}
or a valuation domain \cite[\href{https://stacks.math.columbia.edu/tag/0ASX}{Tag 0ASX}]{SP}.
Finally, we want $\Spec(\varphi)$ to have integral fibers over the
PF-primes of $A$, which holds true if all fibers of $f$ are integral,
or if $f$ is smooth and $A$ is a valuation domain; for the latter
case, note that for \emph{any} $p\in\Spec(A)$, $B/pB=\mathcal{O}_{T,t}/p\mathcal{O}_{T,t}$
is a local ring of $T\times_{S}\Spec(A/p)$, hence normal (in particular
integral) since $f$ is smooth and $A/p$ is normal, being yet another
valuation domain. In particular: 
\begin{cor}
\label{cor:SmoothOverPruferIsConvexe}Over a Prüfer scheme $S$, any
smooth morphism is convex.
\end{cor}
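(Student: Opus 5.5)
The plan is to reduce convexity of a smooth $f:T\rightarrow S$ over a Prüfer scheme $S$ to naive convexity of the induced local homomorphisms, and then to check the three hypotheses of the preceding lemma (GCD-domain, Ohm-Rush, integral fibers over PF-primes).

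First, a smooth morphism is flat, so the definition of convexity applies. By Proposition~\ref{prop:ConvexNP}, it suffices to show that for every $t\in T$ above $s\in S$, the image of $\mathcal{D}_{S,s}\hookrightarrow\mathcal{D}_{T,t}$ is convex. By the chain of implications in the subsection "Convexity, II", this follows once $\mathcal{D}(\mathcal{O}_{S,s})$ is convex in $\mathcal{D}(\mathcal{O}_{T,t})$. That is, it suffices that the faithfully flat local morphism $\varphi:A=\mathcal{O}_{S,s}\rightarrow B=\mathcal{O}_{T,t}$ be naively convex. Here I use the fact that $\mathcal{D}_{S,s}\hookrightarrow\mathcal{D}(\mathcal{O}_{S,s})$ and $\mathcal{D}_{T,t}\hookrightarrow\mathcal{D}(\mathcal{O}_{T,t})$ are convex embeddings. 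In fact, since $A$ is a domain, $\mathcal{D}_{S,s}=\mathcal{D}(A)$.

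Next, I would apply the last lemma of that subsection and verify its three hypotheses in turn.
\begin{enumerate}
\item The ring $A$ is a valuation domain, hence a GCD-domain: any two elements are comparable under divisibility, so their gcd is the one that divides the other.
\item The map $\varphi$ is Ohm-Rush. The ring $B$ is a localization of a finitely presented (smooth) $A$-algebra, and $A$ is a valuation domain, so \cite[\href{https://stacks.math.columbia.edu/tag/0ASX}{Tag 0ASX}]{SP} gives the Ohm-Rush property.
\item For every prime $p$ of $A$, and in particular every PF-prime, the fiber ring $B/pB$ is integral. Indeed, $B/pB$ is a local ring of $T\times_S\Spec(A/p)$, which is smooth over the valuation domain $A/p$. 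A scheme smooth over a normal base is normal, so its local rings are normal domains. Hence $pB$ is prime, and $\Spec(\varphi)$ has integral fibers over all primes.
\end{enumerate}

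With these three hypotheses in hand, the lemma yields that $\varphi$ is naively convex, which concludes the argument. The main point requiring care is the third step. The fibers of $\Spec(\varphi)$ are really the spectra of localizations of $B/pB$ at primes above $p$, so I must check that integrality of $B/pB$, i.e. primality of $pB$, is exactly what the lemma's proof uses. It is: the proof uses only that $pB$ is a prime ideal of $B$.

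The second step is the other place where care is needed, since the Ohm-Rush citation must apply to a local ring of a finite type algebra rather than to the algebra itself. If the cited tag only covers finitely presented algebras, I would pass to the localization. For this I would use that over a valuation domain a flat finite type algebra is finitely presented, and that the content ideal is compatible with localization of $B$ at a prime, using flatness to compute contents via $I=A\cap IB$.
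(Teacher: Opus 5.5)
Your proposal is correct and is essentially the paper's own argument. You use the chain of implications from ``Convexity, II'' to reduce to naive convexity of $\mathcal{O}_{S,s}\to\mathcal{O}_{T,t}$, and then check the three hypotheses of the last lemma exactly as the paper does: valuation domains are GCD-domains, Ohm--Rush holds by Tag 0ASX, and primality of $pB$ follows from normality of $T\times_S\Spec(A/p)$.

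Two side remarks in your write-up are unnecessary and not quite right, though your main line never uses them:
\begin{itemize}
\item The aside $\mathcal{D}_{S,s}=\mathcal{D}(\mathcal{O}_{S,s})$ can fail for a Prüfer scheme in the paper's weak sense when infinitely many irreducible components accumulate at $s$, since a nonzero germ need not be regular on any neighbourhood. It does hold if $S$ is locally integral.
\item Your fallback for the Ohm--Rush property would need more than $I=A\cap IB$. Contents are not compatible with arbitrary localizations of $B$; for instance, $A\to\mathcal{Q}(A)$ is not Ohm--Rush.
\end{itemize}
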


\section{The Néron property}

\subsection{The Néron property\protect\label{subsec:NeronProp}}

Suppose that $\WeakAss(S)=\Max(S)$, and is locally finite in $S$.
By proposition~\ref{prop:NoPathologies}, $\mathcal{M}_{S}$ is quasi-coherent
and for $\eta=\Spec(\mathcal{M}_{S})$, we find 
\[
\mathbb{G}_{m,S}^{lft}\times_{S}\eta=\mathbb{G}_{m,\eta}^{lft}=\mathbb{G}_{m,\eta}
\]
where the second equality comes from the triviality of $\mathcal{D}_{\eta,x}$
for all $x\in\eta$. We have also seen in remark~\ref{rem:M_SIsRes}
that $\Res_{\eta/S}(\mathbb{G}_{m,\eta})$ is the Weil restriction.
In this context, 
\[
\theta:\mathbb{G}_{m,S}^{lft}\rightarrow\Res_{\eta/S}(\mathbb{G}_{m,\eta})
\]
evaluated at an $S$-scheme $T$ merely computes the canonical base
change morphism
\[
\mathbb{G}_{m,S}^{lft}(T)=\Hom_{S}(T,\mathbb{G}_{m,S}^{lft})\rightarrow\Hom_{\eta}(T_{\eta},\mathbb{G}_{m,\eta}^{lft})=\mathbb{G}_{m,\eta}(T_{\eta}).
\]
Thus if $T\rightarrow S$ is flat and convex, proposition~\ref{prop:ConvexNP}
implies that it is an isomorphism
\[
\mathbb{G}_{m,S}^{lft}(T)\stackrel{\simeq}{\longrightarrow}\mathbb{G}_{m,\eta}(T_{\eta}).
\]

\subsection{Cocharacters of smooth finitely presented groups}

Let $G$ be a finitely presented smooth group scheme over $S$, and
let $X(G):\Sch_{S}^{\circ}\rightarrow\Grp$ be its (fpqc) sheaf of
characters, which maps an $S$-scheme $T$ to the (abelian) group
\[
X(G)(T)=\Hom_{\Grp/T}(G_{T},\mathbb{G}_{m,T}).
\]

\begin{prop}
The sheaf $X(G)$ satisfies the valuative criterion of properness. 
\end{prop}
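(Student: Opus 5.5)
To verify the valuative criterion, I fix a valuation ring $A$ with fraction field $K$ and a morphism $\Spec(A)\rightarrow S$. Base changing, I may assume $S=S'=\Spec(A)$ and $G=G_{A}$. I must show that restriction $X(G)(A)\rightarrow X(G)(K)$ is bijective.

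\emph{Injectivity.} Since $G$ is flat over $A$, it has no $A$-torsion in its structure sheaf. Hence $\mathcal{O}_{G}\rightarrow\mathcal{O}_{G_{K}}$ is injective, i.e.~$G_{K}$ is schematically dense in $G$. Two morphisms $G\rightarrow\mathbb{G}_{m,A}$, i.e.~two units of $\Gamma(G,\mathcal{O}_{G})$, that agree on $G_{K}$ therefore agree.

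\emph{Surjectivity.} I would use the Néron property of \S\ref{subsec:NeronProp}. The scheme $S'=\Spec(A)$ is integral, so $\WeakAss(S')=\Max(S')=\{\eta\}$, and $\mathcal{M}_{S'}=K$. Since $S'$ is Prüfer and $G\rightarrow S'$ is smooth, Corollary~\ref{cor:SmoothOverPruferIsConvexe} says that $G\rightarrow S'$ is flat and convex. The same holds for $G\times_{S'}G$, which is also smooth over $S'$. Hence restriction gives isomorphisms
\[
\mathbb{G}_{m,A}^{lft}(G)\simeq\mathbb{G}_{m}(G_{K}),\qquad\mathbb{G}_{m,A}^{lft}(G\times_{A}G)\simeq\mathbb{G}_{m}(G_{K}\times_{K}G_{K}).
\]
Given a character $\chi:G_{K}\rightarrow\mathbb{G}_{m,K}$, I take its unique extension $\tilde{\chi}:G\rightarrow\mathbb{G}_{m,A}^{lft}$. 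The two morphisms $\tilde{\chi}\circ m$ and $(\tilde{\chi}\circ p_{1})\cdot(\tilde{\chi}\circ p_{2})$ from $G\times_{A}G$ to $\mathbb{G}_{m,A}^{lft}$ agree after restriction to the generic fiber. By the second isomorphism they are equal, so $\tilde{\chi}$ is a group homomorphism.

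\emph{The main obstacle} is to show that $\tilde{\chi}$ lands in the neutral component $\mathbb{G}_{m,A}\subset\mathbb{G}_{m,A}^{lft}$. Consider the composite homomorphism $c:G\rightarrow\mathbb{G}_{m,A}^{lft}\rightarrow\Pi_{A}$.
\begin{itemize}
\item Fix a point $s\in\Spec(A)$, corresponding to a prime $p$. The fiber $\pi^{-1}(s)$ is the discrete group $\mathcal{D}_{A,s}=K^{\times}/A_{p}^{\times}$. This is the value group of the valuation ring $A_{p}$, hence torsion-free.
\item On the fiber $G_{s}$, the map $c_{s}$ is a homomorphism from a smooth group scheme of finite type over $k(s)$ to an étale group with torsion-free geometric points. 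It is constant on connected components, so it factors through the component group $\pi_{0}(G_{s})$.
\item Because $G$ is finitely presented, $\pi_{0}(G_{s})$ is finite (after passing to $\bar{k}(s)$). A homomorphism from a finite group to a torsion-free group is trivial, so $c_{s}$ is trivial.
\end{itemize}
Thus $c$ maps $G$ set-theoretically into the unit section of $\Pi_{A}$. That unit section is open, since $\Pi_{A}\rightarrow S'$ is a local isomorphism, so $c$ factors through it scheme-theoretically. Consequently $\tilde{\chi}$ factors through $\ker(\mathbb{G}_{m,A}^{lft}\rightarrow\Pi_{A})=\mathbb{G}_{m,A}$ and lies in $X(G)(A)$. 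It restricts to $\chi$ on the generic fiber, which proves surjectivity.

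The care needed here is in the torsion-freeness of $\mathcal{D}_{A,s}$ at every point $s$, not only the closed point. This follows because each localization $A_{p}$ is again a valuation ring with fraction field $K$.
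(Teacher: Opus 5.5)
Your proof is correct, and its overall structure matches the paper's. You reduce to $S=\Spec(A)$, extend $\chi$ to $\tilde{\chi}:G\rightarrow\mathbb{G}_{m,A}^{lft}$ using Corollary~\ref{cor:SmoothOverPruferIsConvexe} and the Néron property of \S\ref{subsec:NeronProp}, and then play finiteness of $\pi_{0}(G_{s})$ against torsion-freeness of $\mathcal{D}_{A,s}=K^{\times}/A_{p}^{\times}$ to land in the neutral component. You also spell out the fiberwise argument and the openness of the unit section of $\Pi_{A}$, which the paper leaves implicit.

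The genuine difference is how you show that $\tilde{\chi}$ is a homomorphism. The paper invokes Proposition~\ref{prop:SeparatedCase}, the separatedness of $\mathbb{G}_{m,S}^{lft}$, which rests on the coherence/uniserial analysis of Section 3 and the valuative criterion of separatedness. It then argues that the coincidence scheme $Z$ is closed and contains the schematically dense generic fiber of the flat $G\times_{S}G$. You instead apply the uniqueness half of the Néron property to the second smooth scheme $G\times_{A}G$, which bypasses Section 3 entirely and keeps the argument inside the convexity formalism. The paper's route, by contrast, goes through a structural property of $\mathbb{G}_{m,S}^{lft}$ that is of independent interest.

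Your route actually needs less than you invoke. Injectivity of $\mathbb{G}_{m,A}^{lft}(T)\rightarrow\mathbb{G}_{m}(T_{\eta})$ holds for every flat $T$, because $\vartheta$ is injective in the flat case. So convexity of $G\times_{A}G$ is not required for that step. For the same reason, your separate schematic-density proof of injectivity of $X(G)(A)\rightarrow X(G)(K)$ is fine but could equally be read off from the injectivity of $\vartheta$ on $T=G$.
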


\begin{proof}
We have to show that for any $S$-valuation domain $\mathcal{O}$
with fraction field $K$, the natural map $X(G)(\mathcal{O})\rightarrow X(G)(K)$
is bijective. We may assume that $S=\Spec(\mathcal{O})$, in which
case $\eta=\Spec(\mathcal{M}_{S})=\Spec(K)$. Since $T=G$ is smooth
over $S$, $T\rightarrow S$ is (flat and) convex by corollary~\ref{cor:SmoothOverPruferIsConvexe},
so $\mathbb{G}_{m,S}^{lft}(T)\rightarrow\mathbb{G}_{m,\eta}(T_{\eta})$
is an isomorphism. This says that any morphism of $\eta$-schemes
$a:G_{\eta}\rightarrow\mathbb{G}_{m,\eta}$ extends uniquely to a
morphism of $S$-schemes $\alpha:G\rightarrow\mathbb{G}_{m,S}^{lft}$.
If $a$ is a character, i.e.~compatible with the group structures,
then so is $\alpha$. For instance to check that $\alpha(xy)=\alpha(x)\alpha(y)$,
we may consider the corresponding scheme of coincidence $Z$ in $G\times_{S}G$;
since $\mathbb{G}_{m,S}^{lft}$ is separated over $S$ by proposition~\ref{prop:SeparatedCase},
$Z$ is closed in $G\times_{S}G$; since $a=\alpha_{\eta}$ is a character,
$Z_{\eta}=(G\times_{S}G)_{\eta}$; since $G\times_{S}G$ is flat over
$S$, $Z=G\times_{S}G$, which proves our claim. Now observe that
since $G\rightarrow S$ is finitely presented, its fibers have finite
groups of connected components; on the other hand since $S$ is normal,
the groups of connected components of the fibers of $\mathbb{G}_{m,S}^{lft}\rightarrow S$
have no torsion; thus $\alpha$ factors through the neutral component
$\mathbb{G}_{m,S}$ of $\mathbb{G}_{m,S}^{lft}$. So any character
$a:G_{\eta}\rightarrow\mathbb{G}_{m,\eta}$ indeed extends uniquely
to a character $\alpha:G\rightarrow\mathbb{G}_{m,S}$.
\end{proof}
\bibliographystyle{plain}
\bibliography{MyBib}

\end{document}